\newcommand{\Ind}{\mathds{1}} 
\newcommand{\process}[1]{{\{#1_t\}_{t\ge0}}}
\newcommand{\Exp}{{\mathbb{E}}} 
\newcommand{\Prob}{{\mathbb{P}}} 
\newcommand{\RR}{{\mathbb R}} 
\newcommand{\ZZ}{{\mathbb Z}} 
\newcommand{\Rm}{{{\mathbb R}^m}}
\newcommand{\NN}{{\mathbb N}} 
\newcommand{\df}{\coloneqq} 
\newcommand{\sB}{{\mathscr{B}}} 
\newcommand{\cB}{{\mathcal{B}}} 
\newcommand{\Usm}{\mathfrak{U}_{\mathrm{sm}}}  
\newcommand{\tUsm}{\widetilde{\mathfrak{U}}_{\mathrm{sm}}} 
\newcommand{\Cc}{\mathnormal{C}}
\newcommand{\Ag}{{\mathscr{A}}}
\newcommand{\cA}{{\mathcal{A}}}
\newcommand{\Lg}{{\mathscr{L}}}
\newcommand{\fI}{{\mathfrak{I}}}
\newcommand{\cI}{{\mathscr{I}}}   
\newcommand{\Lyap}{{\mathscr{V}}}
\newcommand{\cP}{{\mathscr{P}}} 
\newcommand{\cK}{{\mathscr{K}}} 
\newcommand{\tv}{{\rule[-.45\baselineskip]{0pt}{\baselineskip}\mathsf{TV}}}
\newcommand{\abs}[1]{\lvert#1\rvert}
\newcommand{\norm}[1]{\lVert#1\rVert}
\newcommand{\babs}[1]{\bigl\lvert#1\bigr\rvert}
\newcommand{\Babs}[1]{\Bigl\lvert#1\Bigr\rvert}
\newcommand{\babss}[1]{\biggl\lvert#1\biggr\rvert}
\newcommand{\bnorm}[1]{\bigl\lVert#1\bigr\rVert}
\DeclareMathOperator*{\diag}{diag}
\DeclareMathOperator*{\Argmin}{Arg\,min}
\DeclareMathOperator{\trace}{trace}
\begin{document}

\title*{Uniform polynomial rates of convergence for a class of
L\'evy--driven
controlled SDEs
arising in multiclass  many-server queues}
\titlerunning{Uniform polynomial rates of convergence for a class of 
L\'evy--driven SDEs} 
\author{Ari Arapostathis, Hassan Hmedi, Guodong Pang, and Nikola Sandri{\'c}}
\institute{Ari Arapostathis \at Department of ECE,
The University of Texas at Austin, EER~7.824, Austin, TX~~78712, \email{ari@ece.utexas.edu}
\and Hassan Hmedi \at Department of ECE,
The University of Texas at Austin,
EER~7.834,
Austin, TX~~78712, \email{hmedi@utexas.edu}
\and Guodong Pang \at 
 The Harold and Inge Marcus Dept. of Industrial and Manufacturing Eng.,
College of Engineering, Pennsylvania State University, University Park, PA~~16802, 
\email{gup3@psu.edu} 
\and Nikola Sandri{\'c} \at  Department of Mathematics, University of Zagreb,
Bijeni\v{c}ka cesta 30,
10000 Zagreb, Croatia,   \email{nsandric@math.hr} 
}

\maketitle

\abstract{
We study the ergodic properties of a class of 
controlled stochastic differential equations (SDEs) driven by
$\alpha$-stable processes which arise as the
limiting equations of multiclass
queueing models in the Halfin--Whitt regime that have heavy--tailed arrival processes. 
When the safety staffing parameter is positive, we show that the SDEs are uniformly
ergodic and enjoy a polynomial rate of convergence to the invariant
probability measure in total variation, which
is uniform over all stationary Markov controls resulting in a locally Lipschitz
continuous drift. 
We also derive a matching lower bound on the rate of convergence
(under no abandonment).
On the other hand, when all abandonment rates are positive, we show that the
SDEs are exponentially ergodic uniformly over the above-mentioned class of controls. 
Analogous results are obtained for L\'evy--driven SDEs arising from
multiclass many-server queues under
 asymptotically negligible service interruptions.  For these equations,
we show that the aforementioned ergodic properties are uniform over all
stationary Markov controls.
We also extend a key functional central limit theorem concerning diffusion
approximations so as to make it applicable
to the models studied here. 
}

\section{Introduction}

L{\'e}vy--driven controlled stochastic differential equations (SDEs) arise as
scaling limits for multiclass many-server
queues with heavy-tailed arrival processes and/or with asymptotically negligible
service interruptions; see \cite{PW09, PW10, APS19}. 
In these equations, the control appears only in the drift and corresponds to a
work-conserving scheduling policy in multiclass many-server queues, that is, the
allocation of the available service capacity to each class under a non-idling condition
(no server idles whenever there are jobs in queue).
For the limiting process, we focus on  stationary Markov controls, namely
time-homogeneous functions of the process.
When the arrival process of each class is heavy-tailed
(for example, with regularly varying interarrival times),
the L{\'e}vy process driving the SDE is a multidimensional anisotropic
$\alpha$-stable process, $\alpha \in (1,2)$.
When the system is subject to service interruptions (in an alternating renewal
environment affecting the service processes only), the L{\'e}vy process is
a combination of  either
a Brownian motion, or an anisotropic $\alpha$-stable process,
$\alpha \in (1,2)$,  and an independent compound Poisson process.

Ergodic properties of these controlled SDEs are of great interest since they
help to understand
the performance of the queueing systems. 
In \cite{APS19}, the ergodic properties of the SDEs under constant controls
are thoroughly studied. It is shown that when the safety staffing is positive,
the SDEs have a polynomial rate of convergence to stationarity in total variation,
while when the abandonment rates are positive, the rate of convergence is exponential.
However,  the technique developed in \cite{APS19} does not equip us to investigate
the ergodic properties of these SDEs beyond the constant controls,
since the Lyapunov functions employed are modifications of the common quadratic
functions that have been developed for piecewise linear diffusions \cite{DG13}. 

It was recently shown in \cite{GS12} that the Markovian multiclass many--server queues
with positive safety staffing
in the Halfin--Whitt regime are stable under any work-conserving scheduling policies. 
Motivated by this significant result, Arapostathis et al. (2018) \cite{AHP18} have
developed a unified approach via a Lyapunov function method which establishes
Foster-Lyapunov equations which are uniform
under stationary Markov controls for the
limiting diffusion and the prelimit diffusion-scaled queueing processes simultaneously.
It is shown that the limiting diffusion is uniformly exponentially ergodic under
any stationary Markov control.

In this paper we adopt and extend the approach in \cite{AHP18} to establish
uniform ergodic properties for L{\'e}vy-driven SDEs.
As done in \cite{APS19},
we distinguish two cases: (i) positive safety staffing,
and (ii) positive abandonment rates. 
We focus primarily on the first case, which exhibits
ergodicity at a polynomial rate, a result which is somewhat surprising.
The second case always results in uniform exponential ergodicity.  
By employing a polynomial Lyapunov function instead of the exponential function used in
 \cite{AHP18}, we first establish an upper bound on the rate of convergence
 which is polynomial.
The drift inequalities carry over with slight modifications
from \cite{AHP18}, while the needed properties of the non-local part
of the generator
are borrowed from \cite{ABC-16}. 
As in \cite{APS19}, we use the technique in \cite{Hairer-16} to establish
a lower bound on the rate of convergence, which actually matches the upper bound.  
As a result, we establish that with positive safety staffing, the rate of convergence
to stationarity in total variation is polynomial with a rate that is
uniform over the family of Markov controls
which result in a locally Lipschitz continuous drift.

When the SDE is driven by an $\alpha$--stable process (isotropic or anisotropic),
in order for the process to be open--set irreducible and aperiodic,
it suffices to require that the controls are stationary Markov and
the drift is locally Lipschitz continuous. 
However, the existing proof of the convergence of the scaled queueing processes
of the multiclass many--server queues with heavy--tailed arrivals to this limit process,
assumes that the drift is Lipschitz continuous \cite{PW10}.
In this paper, we extend this result on the continuity of the integral mapping
(Theorem 1.1 in \cite{PW10}) to drifts that are
locally Lipschitz continuous with at most linear growth (see Lemma~\ref{L4}).
Applying this, we also present an extended  functional central limit theorem (FCLT) for
 multiclass many-server queues with heavy-tailed arrival processes (see Theorem~\ref{T6}).

On the other hand, when the L\'evy process consists of a Brownian motion
and a compound Poisson process, which arises in the multiclass many--server queues
with asymptotically negligible interruptions under the $\sqrt{n}$ scaling,
the SDE has a unique strong solution that is open--set irreducible and aperiodic
under any stationary Markov control.
To study uniform ergodic properties, we also need to account for the second order
derivatives in the infinitesimal generator.
For this reason we modify the Lyapunov function
with suitable titling on the positive and negative half state spaces.
We also discuss the model with a L\'evy process consisting
of a $\alpha$-stable process and a compound Poisson process. 

\subsection{Organization of the paper}
In Section~\ref{S2}, we present a class of SDEs driven by an $\alpha$--stable process,
whose ergodic properties are studied in Section~\ref{S3}. 
In Section~\ref{S4}, we study the ergodic properties of
L{\'e}vy--driven SDEs arising from the multiclass queueing models
with service interruptions. 
In Section~\ref{S5}, we provide a description of the 
multiclass many--server queues with heavy-tailed arrival processes, 
and establish the continuity of the integral mapping with 
a locally Lipschitz continuous function that has at most linear growth,
as well as the associated FCLT.

\subsection{Notation}

We summarize some notation used throughout the paper. 
We use $\Rm$ (and $\mathbb{R}^m_+$), $m\ge 1$,
to denote real-valued $m$-dimensional (nonnegative) vectors, and write $\RR$ for $m=1$.
For $x, y\in \RR$, we write $x \vee y = \max\{x,y\}$,
$x\wedge y = \min\{x,y\}$, $x^+ = \max\{x, 0\}$ and $x^- = \max\{-x,0\}$.
For a set $A\subseteq\Rm$, we use $A^c$, $\partial A$, and $\Ind_{A}$
to denote the complement, the boundary, and the indicator function of $A$, respectively.
A ball of radius $r>0$ in $\Rm$ around a point $x$ is denoted by $\sB_{r}(x)$,
or simply as $\sB_{r}$ if $x=0$. We also let $\sB \equiv \sB_{1}$. 
The Euclidean norm on $\Rm$ is denoted by $\abs{\,\cdot\,}$,
and $\langle \cdot\,,\,\cdot\rangle$ stands for the inner product.
For $x\in\Rm$, we let $\norm{x}^{}_1\df \sum_i \abs{x_i}$,
and we use $x'$ to denote the transpose of $x$.
We use the symbol $e$ to denote the vector whose elements are all equal to $1$,
and $e_i$ for the vector whose $i^{\text{ th}}$ element is equal to $1$ and
the rest are equal to $0$.

We let $\cB(\Rm)$, $\cB_b(\Rm)$, and $\cP(\Rm)$ denote the classes of Borel
measurable functions, bounded Borel measurable functions,
and Borel probability measures on $\Rm$, respectively.
By $\cP_p(\Rm)$, $p>0$, we denote the subset of $\cP(\Rm)$
containing all probability measures $\uppi(\D x)$ with the property that
$\int_{\Rm}\abs{x}^p\uppi(\D{x})<\infty$.
For a finite signed measure $\nu$ on $\Rm$, and a Borel measurable
$f\colon\Rm\to[1,\infty)$,
$\norm{\nu}_{f} \df \sup_{\abs{g}\le f}\,\int_{\Rm} \abs{g(x)}\,\nu(\D{x})$,
where the supremum is over all Borel measurable functions $g$ satisfying this inequality.

\section{The model}\label{S2}
We consider an $m$-dimensional stochastic differential equation (SDE) of the form
\begin{equation}\label{E-sde}
\D X_t\,=\,b(X_t,U_t)\,\D{t} + \D \Hat{A}_t,\qquad X_0=x\in\Rm\,.
\end{equation}
All random processes in \eqref{E-sde} live in a complete
probability space $(\varOmega,\mathcal{F},\Prob)$.
We have the following structural hypotheses.
\begin{enumerate}
\item [(A1)]
The control process $\process{U}$ lives in the $(m-1)$-simplex
\begin{equation*}
\Delta \,\df\, \{ u\in\Rm \,\colon u\ge0\,,\ \langle e,u\rangle = 1\}\,,
\end{equation*}
and the drift $b\colon\Rm\times \Delta\to\Rm$
 is given by
\begin{equation}\label{E-drift}
\begin{aligned}
b(x,u) &\,=\,
\ell-M\bigl(x-\langle e,x\rangle^+u\bigr)-\langle e,x\rangle^+\Gamma u\\[5pt]
&\,=\, \begin{cases}
\ell - \bigl(M +(\Gamma-M)u e'\bigr) x\,, & \langle e,x\rangle>0\,,\\[2pt]
\ell - Mx\,, &\langle e,x\rangle\le0\,,
\end{cases}
\end{aligned}
\end{equation}
where  $\ell \in \Rm$,
$M =\diag(\mu_1,\dotsc,\mu_m)$ with $\mu_i>0$, and
$\Gamma=\diag(\gamma_1,\dotsc,\gamma_m)$ with $\gamma_i\in\RR_{+}$, $i=1,\dotsc,m$.

\item [(A2)]
The process
$\process{\Hat{A}}$
is an anisotropic L\'evy process with independent symmetric one-dimensional
$\alpha$-stable components for $\alpha \in (1,2)$. 
\end{enumerate}

Define
\begin{equation*}
\cK_+ \,\df\, \bigl\{ x\in\Rm\,\colon \langle e,x\rangle
> 0\bigr\}\,,\quad\text{and\ \ }
\cK_- \,\df\, \bigl\{ x\in\Rm\,\colon \langle e,x\rangle
\le 0\bigr\}\,.
\end{equation*}
A control $U_t$ is called stationary Markov, if it takes
the form $U_t=v(X_t)$ for a Borel measurable function $v\colon\cK_+\to\Delta$. 
We let $\Usm$ denote the class of stationary Markov controls, and $\tUsm$ its
subset consisting of those controls under which
\begin{equation*}
b_v(x)\,\df\,b\bigl(x,v(x)\bigr)
\end{equation*}
is locally Lipschitz continuous.
These controls can be identified with the function $v$.
Note that if $v\colon\cK_+\to\Delta$ is Lipschitz continuous when
restricted to any set $\cK_+\cap\sB_R$, $R>0$, then $v\in\tUsm$,
but this property is not necessary for
membership in $\tUsm$.

Clearly, for any $v\in\Usm$,
the drift $b_v(x)$  has at most linear growth.
Therefore, if $v\in\tUsm$, then using
\cite[Theorem~3.1, and Propositions~4.2 and~4.3]{Albeverio-2010},
one can conclude that the SDE \eqref{E-sde} admits a unique nonexplosive strong
solution $\process{X}$ which is a strong Markov process and it satisfies the
$C_b$-Feller property.
In addition, in the same reference, it is shown that the infinitesimal generator
$(\Ag^v,\mathcal{D}_{\Ag^v})$ of $\process{X}$
(with respect to the Banach space $(\mathcal{B}_b(\RR^{m}),\norm{\,\cdot\,}_\infty)$)
satisfies $C_c^{2}(\Rm)\subseteq\mathcal{D}_{\Ag^v}$ and 
\begin{equation}\label{E-Ag}
\Ag^v\bigr|_{C_c^{2}(\Rm)}f(x) \,\df\,
\bigl\langle b_v(x),\nabla f(x)\bigr\rangle
+ \fI_\alpha f(x)\,,
\end{equation}
where
\begin{equation*}
\fI_\alpha f(x)\,\df\,\sum_{i=1}^d\int_{\RR_*}
\mathfrak{d} f(x; y_i e_i)\,\frac{\xi_i\,\D{y}_i}{\abs{y_i}^{1+\alpha}}\,,
\end{equation*}
for some positive constants $\xi_1,\dotsc,\xi_m$,
and
\begin{equation}\label{frakd}
\mathfrak{d} f(x;y) \,\df\,
f(x+y)-f(x)-\langle y,\nabla f(x)\rangle\,,\qquad f\in C^1(\Rm)\,.
\end{equation}
Here, $\mathcal{D}_{\Ag^v}$
and $C_c^{2}(\Rm)$
denote the domain of $\Ag^v$ 
and the space of twice continuously differentiable functions with compact support,
respectively. 

We let  $\Prob^v_x$ and $\Exp^v_x$ denote the probability measure and
expectation operator on the canonical space of the solution of \eqref{E-sde} under
$v\in\tUsm$  and starting at $x$.
Also, $P^v_t(x,\D y)$ denotes its transition probability.
From  the proof of Theorem~3.1\,(iv) in \cite{APS19} we have the following result.

\begin{theorem}\label{T1}
Under any $v\in\tUsm$, $P^v_t(x,B)>0$ for all $t>0$, $x\in\Rm$ and $B\in\cB(\Rm)$
with positive Lebesgue measure. In particular, under any $v\in\tUsm$,
the process $\process{X}$ is open--set irreducible and aperiodic in the
sense of \cite{Meyn-Tweedie}.
\end{theorem}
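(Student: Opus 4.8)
The plan is to establish the key positivity statement $P^v_t(x,B)>0$ for every $t>0$, every $x\in\Rm$, and every Borel set $B$ of positive Lebesgue measure; the irreducibility and aperiodicity assertions in the sense of \cite{Meyn-Tweedie} then follow immediately, since open-set irreducibility means exactly that $P^v_t(x,O)>0$ for some $t$ whenever $O$ is a nonempty open set (which has positive Lebesgue measure), and aperiodicity is automatic once one has $P^v_t(x,B)>0$ for \emph{all} $t>0$ rather than for a lattice of times. Since the statement says the result follows from the proof of Theorem~3.1\,(iv) in \cite{APS19}, the task is really to verify that that argument goes through for every $v\in\tUsm$. The structural input we can rely on from the excerpt is that, for $v\in\tUsm$, the drift $b_v$ is locally Lipschitz continuous with at most linear growth, so by \cite{Albeverio-2010} the SDE \eqref{E-sde} has a unique nonexplosive strong Markov solution with the $C_b$-Feller property, and that the driving noise $\process{\Hat A}$ is an anisotropic $\alpha$-stable process with $\alpha\in(1,2)$ whose one-dimensional components each have a density that is strictly positive on all of $\RR$.

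The main step I would carry out is a support/stratification argument for the transition kernel. Fix $x$, $t>0$, and $B$ with $\abs{B}>0$. First I would condition on the jump structure of $\process{\Hat A}$: since each coordinate is a symmetric $\alpha$-stable Lévy process, it has a nontrivial Lévy measure and, in particular, with positive probability the path of $\process{\Hat A}$ on $[0,t]$ consists of (say) exactly $m$ ``large'' jumps, one in each coordinate direction, occurring at prescribed times, with a small-jump/diffusive remainder. Conditionally on a jump configuration, $X_t$ is obtained from $x$ by alternately flowing along the (deterministic, locally Lipschitz) ODE $\dot y = b_v(y)$ and adding jump increments. The composition of these flow maps and translations is a $C^1$-diffeomorphism (the ODE flow is a diffeomorphism by local Lipschitz continuity and nonexplosion, translations trivially so), hence it maps sets of positive Lebesgue measure to sets of positive Lebesgue measure, and its Jacobian is bounded below on compacts. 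Because the jump sizes can be chosen to range over open sets in $\RR^m$ with strictly positive $\alpha$-stable density, integrating the conditional law over these jump sizes produces an absolutely continuous component of $P^v_t(x,\cdot)$ whose density is strictly positive on an open set that can be steered anywhere in $\Rm$; reachability of an arbitrary neighborhood is handled by choosing the jump times and sizes appropriately, using that the drift has at most linear growth so the flow does not blow up on the relevant time interval. Combining these, $P^v_t(x,B)\ge \int_{B} p^v_t(x,y)\,\D y$ for a lower semicontinuous, strictly positive $p^v_t(x,\cdot)$, which is positive since $\abs{B}>0$. An alternative, cleaner route — and the one likely used in \cite{APS19} — is to invoke a known stratified/forward-controllability result for jump SDEs (e.g.\ an existence-of-density or a support theorem of Stroock--Varadhan type for Lévy-driven SDEs under local Lipschitz coefficients), which directly yields the absolute continuity and full support of $P^v_t(x,\cdot)$; I would cite that and verify its hypotheses (local Lipschitz drift, $C^1$ flow, $\alpha$-stable noise with full-support components) hold here.

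The main obstacle, and the place requiring care, is that $b_v$ is only \emph{locally} Lipschitz and the drift is genuinely nonsmooth across the hyperplane $\langle e,x\rangle = 0$, so one cannot appeal to smooth Hörmander-type hypoellipticity; the argument must be robust to kinks in the vector field. This is precisely why the jump-driven reachability argument is the right tool: it needs only that the ODE flow is a homeomorphism (indeed a bi-Lipschitz or $C^1$ map off a null set) and that the noise has an absolutely continuous component with full support, both of which survive the lack of smoothness. A secondary technical point is ensuring nonexplosion of the conditioned (piecewise-ODE) trajectories on $[0,t]$ uniformly enough to integrate the estimates — but this is immediate from the at-most-linear growth of $b_v$, which gives a Gronwall bound on the flow. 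Once the positivity of $P^v_t(x,B)$ is in hand for all $t>0$, open-set irreducibility and aperiodicity in the sense of \cite{Meyn-Tweedie} are formal consequences, completing the proof.
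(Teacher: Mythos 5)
The paper offers no proof of its own for this statement; it simply notes that the result follows from the proof of Theorem~3.1\,(iv) in \cite{APS19}, and the only observation being made in the present paper is that the argument there (given for constant controls) carries over verbatim to any $v\in\tUsm$, since all that is used about the drift is local Lipschitz continuity and at-most-linear growth. Your blind proof is therefore not being compared against an in-paper argument, but against the cited one. That said, the jump-steering/support argument you sketch is the standard, and essentially correct, route to both open-set irreducibility and strict positivity of $P^v_t(x,\cdot)$ for SDEs driven by anisotropic stable noise with a locally Lipschitz drift, and it captures the same mechanism as the cited reference: positivity of the one-dimensional stable jump densities plus the fact that the inter-jump dynamics are given by a bi-Lipschitz solution map. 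You are right that the anisotropy forces you to use $m$ jumps, one per coordinate, and you correctly identify that it is local Lipschitzness and nonexplosion (from linear growth) that make the flow well-behaved.

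Two places to tighten. First, jumps cannot occur ``at prescribed times'' (a null event); the conditioning should be on jump times lying in small open windows, jump sizes in open intervals, and the small-jump part of $\Hat{A}$ staying uniformly small on $[0,t]$, all of which has positive probability. Second, between the large jumps the process is \emph{not} a pure ODE flow $\dot y=b_v(y)$ --- the small-jump part of $\Hat{A}$ is still driving. The correct object is the solution map $\psi^{\omega^1}_{s,t}$ of the inhomogeneous equation driven by the (conditioned) small-jump path $\omega^1$; this map is still bi-Lipschitz on compacts by Gronwall with the local Lipschitz constant of $b_v$, so the rest of your argument survives. Related to this: since $b_v$ is only locally Lipschitz and not $C^1$, the flow map is only Lipschitz, and one should not literally appeal to a ``Jacobian''; the change-of-variables/area formula for Lipschitz maps provides the needed a.e.\ lower bound on the pushforward density. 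With these refinements your steering construction yields a lower-semicontinuous strictly positive component of $P^v_t(x,\cdot)$, from which $P^v_t(x,B)>0$ for every $B$ of positive Lebesgue measure, and aperiodicity follows at once because the positivity holds for \emph{every} $t>0$.
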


\begin{remark}
As far as the results in this paper are concerned we can replace
the anisotropic non-local operator $\fI_\alpha$ with
the isotropic operator
\begin{equation*}
\int_{\RR_*}
\mathfrak{d} f(x; y)\,\frac{\D{y}}{\abs{y}^{m+\alpha}}\,,
\end{equation*}
as done in \cite{APS19}.
\end{remark}

We also define
\begin{equation*}
 \Ag^u f(x) \,\df\,
\bigl\langle b(x,u),\nabla f(x)\bigr\rangle + \fI_\alpha f(x)\,,\quad u\in\Delta\,.
\end{equation*}

In the next section we study the ergodic properties of $\process{X}$. 
To facilitate the analysis, we define the \emph{spare capacity},
or \emph{safety staffing},
$\beta$ as
\begin{equation}\label{E-varrho}
\beta \,\df\, - \langle\,e,M^{-1}\ell\,\rangle\,.
\end{equation}
Note that if we let $\zeta =\frac{\beta}{m} e + M^{-1}\ell$,
with $\beta$ as in \eqref{E-varrho}, then a mere translation of the origin of the form
$\Tilde X_t = X_t - \zeta$
results in an SDE of the same form, with the only difference
that the constant term $\ell$ in the drift equals $- \frac{\beta}{m} Me$.
Since translating the origin does not alter the ergodic properties of the process,
without loss of generality, we assume throughout the paper that the drift
in \eqref{E-drift} has the form
\begin{equation}\label{E-drift2}
b(x,u) \,=\, -\frac{\beta}{m}Me
-M(x-\langle e,x\rangle^+u)-\langle e,x\rangle^+\Gamma u\,.
\end{equation}

\section{Uniform ergodic properties}\label{S3}

We recall some important definitions used in
\cite[Section~2.3]{AHP18}. 

\begin{definition}\label{D1}
We fix some convex function $\psi\in\Cc^2(\RR)$ with the property that
$\psi(t)$ is constant for $t\le-1$, and $\psi(t)=t$ for $t\ge0$.
The particular form of this function is not important.
But to aid some calculations we fix this function as
\begin{equation*}
\psi(t) \df\,
\begin{cases}
-\frac{1}{2}, & t\le-1\,,\\[3pt]
(t+1)^3 -\frac{1}{2}(t+1)^4-\frac{1}{2} & t \in [-1,0]\,,\\[3pt]
t & t\ge 0\,.
\end{cases}
\end{equation*}
Let $\cI = \{1,\dotsc,m\}$.
With $\delta$ and $p$ positive constants, we define
\begin{equation*}
 \Psi(x) \,\df\, \sum_{i\in\cI} \frac{\psi(x_i)}{\mu_i}\,,\quad\text{and}\quad
V_{p}(x) \,\df\, \biggl(\delta \Psi(-x)+ \Psi(x)
+ \frac{m}{\min_{i\in\cI}\mu_i}\biggr)^p\,.
\end{equation*}
\end{definition}
Note that the term inside the parenthesis in the definition of $V_p$, or
in other words $V_1$, is bounded away from $0$ uniformly in $\delta\in(0,1]$.
The function $V_p$ also depends on the parameter $\delta$
which is suppressed in the notation.

For $x\in\Rm$ we let $x^\pm\df \bigl(x_1^\pm,\dotsc,x_m^\pm\bigr)$.
The results which follows is a corollary of Lemma~2.1 in \cite{AHP18}, but
we sketch the proof for completeness.

\begin{lemma}\label{L1}
Assume $\beta>0$, and let $\delta\in(0,1]$ satisfy
\begin{equation}\label{EL1A}
\Bigl(\max_{i\in\cI} \tfrac{\gamma_i}{\mu_i}-1\Bigr)^+\delta \,\le\, 1\,.
\end{equation}
Then, the function $V_p$ in Definition~\ref{D1} satisfies, for any $p>1$
and for all $u\in\Delta$,
\begin{align}
\bigl\langle b(x,u),\nabla V_p (x)\bigr\rangle &\,\le\,
p\Bigl(\delta\beta +
\frac{m}{2}(1+\delta) - \delta \norm{x}^{}_1\Bigr) V_{p-1}(x)
\quad\forall\,x\in\cK_-\,, \label{EL1B}\\
\bigl\langle b(x,u),\nabla V_p(x)\bigr\rangle &\,\le\,
- p\Bigl(\frac{\beta}{m}-\delta\beta-\delta\frac{m}{2}
+\delta\norm{x^-}{}_1\Bigr)\,V_{p-1}(x)
\quad\forall\,x\in\cK_+\,.\label{EL1C}
\end{align}
\end{lemma}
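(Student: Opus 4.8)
The plan is to reduce to the case $p=1$ and then establish the two bounds directly for $V_1$, essentially re-deriving the relevant part of \cite[Lemma~2.1]{AHP18}. Since $V_p=V_1^{\,p}$ and $V_1\ge\tfrac m{\min_{i}\mu_i}>0$, the chain rule gives $\nabla V_p(x)=p\,V_1(x)^{p-1}\nabla V_1(x)=p\,V_{p-1}(x)\,\nabla V_1(x)$, so that $\langle b(x,u),\nabla V_p(x)\rangle=p\,V_{p-1}(x)\,\langle b(x,u),\nabla V_1(x)\rangle$ with $p\,V_{p-1}(x)>0$. Hence \eqref{EL1B} and \eqref{EL1C} follow once one shows $\langle b(x,u),\nabla V_1(x)\rangle\le\delta\beta+\tfrac m2(1+\delta)-\delta\norm{x}_1$ for $x\in\cK_-$ and $\langle b(x,u),\nabla V_1(x)\rangle\le-\bigl(\tfrac\beta m-\delta\beta-\tfrac m2\delta+\delta\norm{x^-}_1\bigr)$ for $x\in\cK_+$; multiplying through by $p\,V_{p-1}(x)$ then recovers the stated inequalities.

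For this I would first record the elementary facts about $\psi$ in use: $0\le\psi'\le1$ everywhere, $\psi'\equiv1$ on $[0,\infty)$ and $\psi'\equiv0$ on $(-\infty,-1]$, and $t\,\psi'(-t)\le\tfrac12$ for $t\ge0$ (equivalently $\abs{t}\psi'(t)\le\tfrac12$ for $t\le0$). Differentiation gives $\partial_{x_i}V_1(x)=\tfrac1{\mu_i}h_i$ with $h_i\df\psi'(x_i)-\delta\psi'(-x_i)$, so $h_i\in[-\delta,1]$, and $h_i\ge1-\delta$ whenever $x_i\ge0$; moreover $b_i(x,u)\,\partial_{x_i}V_1(x)=\bigl(b_i(x,u)/\mu_i\bigr)h_i$, the weight $\mu_i$ cancelling because $M$ is diagonal. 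On $\cK_-$ the drift is $u$-free with $b_i(x,u)/\mu_i=-\tfrac\beta m-x_i$, hence $\langle b(x,u),\nabla V_1(x)\rangle=-\tfrac\beta m\sum_i h_i-\sum_i x_ih_i$; the first sum is $\ge-m\delta$ (by $h_i\ge-\delta$), so its contribution is $\le\delta\beta$, while the coordinatewise identities $-x_ih_i+\delta\abs{x_i}=\delta x_i\psi'(-x_i)+(\delta-1)x_i$ for $x_i\ge0$ and $-x_ih_i+\delta\abs{x_i}=\abs{x_i}\psi'(x_i)$ for $x_i<0$, combined with $\delta\le1$ and the $\psi'$ bounds, give $-x_ih_i+\delta\abs{x_i}\le\tfrac12\le\tfrac{1+\delta}2$ for each $i$ and hence $-\sum_i x_ih_i\le\tfrac m2(1+\delta)-\delta\norm{x}_1$. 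Adding yields the $\cK_-$ estimate.

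On $\cK_+$ one has $\langle e,x\rangle^+=\langle e,x\rangle$ and $b_i(x,u)/\mu_i=-\tfrac\beta m-\bigl(x_i-\langle e,x\rangle u_i\bigr)-\tfrac{\gamma_i}{\mu_i}\langle e,x\rangle u_i$, so $\langle b(x,u),\nabla V_1(x)\rangle=T_1+T_2+T_3$ with $T_1=-\tfrac\beta m\sum_i h_i$, $T_2=-\sum_i\bigl(x_i-\langle e,x\rangle u_i\bigr)h_i$ and $T_3=-\langle e,x\rangle\sum_i\tfrac{\gamma_i}{\mu_i}u_ih_i$. Since $\langle e,x\rangle>0$ forces some $x_{i_0}>0$, so $h_{i_0}\ge1-\delta$, while $h_i\ge-\delta$ for $i\ne i_0$, one gets $\sum_i h_i\ge1-m\delta$ and hence $T_1\le-\tfrac\beta m+\delta\beta$. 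The key step is to treat $T_2$ and $T_3$ jointly: rewrite $T_2+T_3=-\sum_i x_ih_i+\langle e,x\rangle\sum_i u_i\bigl(1-\tfrac{\gamma_i}{\mu_i}\bigr)h_i$. Using \eqref{EL1A} one checks that $\bigl(1-\tfrac{\gamma_i}{\mu_i}\bigr)h_i\le1$ for every $i$ --- the only nontrivial case being $\gamma_i>\mu_i$ together with $h_i<0$, which forces $x_i<0$, hence $\abs{h_i}\le\delta$, whence $\bigl(\tfrac{\gamma_i}{\mu_i}-1\bigr)\abs{h_i}\le\bigl(\tfrac{\gamma_i}{\mu_i}-1\bigr)\delta\le1$ by \eqref{EL1A}. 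Therefore $\sum_i u_i\bigl(1-\tfrac{\gamma_i}{\mu_i}\bigr)h_i\le1$, so that $\langle e,x\rangle\sum_i u_i\bigl(1-\tfrac{\gamma_i}{\mu_i}\bigr)h_i\le\langle e,x\rangle$. On the other hand $-\sum_i x_ih_i=-\langle e,x\rangle+\sum_i x_i(1-h_i)$, and since $x_i(1-h_i)=\delta x_i\psi'(-x_i)$ for $x_i\ge0$ (which vanishes for $x_i>1$ and is $\le\tfrac\delta2$ otherwise) and $x_i(1-h_i)\le-\delta\abs{x_i}$ for $x_i<0$, one gets $\sum_i x_i(1-h_i)\le\tfrac{\delta m}2-\delta\norm{x^-}_1$. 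Adding, $T_2+T_3\le\tfrac{\delta m}2-\delta\norm{x^-}_1$, and combining with the bound on $T_1$ gives the $\cK_+$ estimate.

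I expect the main obstacle to be this last step on $\cK_+$. The term $T_3$ and the piece $\langle e,x\rangle\sum_i u_ih_i$ hidden inside $T_2$ are each of order $\langle e,x\rangle$ and individually unbounded as $\langle e,x\rangle\to\infty$, so they cannot be estimated separately; they must be merged into $\langle e,x\rangle\sum_i u_i\bigl(1-\tfrac{\gamma_i}{\mu_i}\bigr)h_i$ and then cancelled against the $-\langle e,x\rangle$ extracted from $-\sum_i x_ih_i$. Making this cancellation work uniformly over all $u\in\Delta$ is exactly the role of hypothesis \eqref{EL1A}: it bounds $\bigl(\tfrac{\gamma_i}{\mu_i}-1\bigr)^+\delta$ by $1$ and thereby tames the potentially destabilizing classes with $\gamma_i>\mu_i$. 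Everything else is routine bookkeeping with the one-dimensional function $\psi$.
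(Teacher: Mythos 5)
Your proof is correct, and the underlying mechanism is the same as the paper's: bound the drift inner product via $\nabla V_p = p\,V_{p-1}\nabla V_1$, use the one--sided Lipschitz bounds $\abs{t}\psi'(t)\le\tfrac12$ for $t\le0$ and $t\psi'(-t)\le\tfrac12$ for $t\ge0$, and deploy hypothesis~\eqref{EL1A} precisely to tame the classes with $\gamma_i>\mu_i$. The one organizational difference worth noting is that the paper treats $\Psi(x)$ and $\Psi(-x)$ separately and cites the key cancellation inequality \eqref{PL1F} directly from \cite[Lemma~2.1]{AHP18}, whereas you work with the merged derivative $h_i=\mu_i\,\partial_{x_i}V_1=\psi'(x_i)-\delta\psi'(-x_i)$, extract $-\langle e,x\rangle$ from $-\sum_i x_ih_i$, and cancel it against $\langle e,x\rangle\sum_i u_i(1-\tfrac{\gamma_i}{\mu_i})h_i$, which you bound by $\langle e,x\rangle$ using $(1-\tfrac{\gamma_i}{\mu_i})h_i\le1$ (where \eqref{EL1A} enters). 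This is a self-contained re-derivation of the content of \eqref{PL1F} in a slightly different but equivalent grouping of terms; it buys you independence from the cited lemma at the cost of a bit more coordinatewise bookkeeping. Everything checks, including the per-coordinate bound $-x_ih_i+\delta\abs{x_i}\le\tfrac12$ on $\cK_-$, the fact that $h_i<0$ forces $x_i<0$ and hence $\abs{h_i}\le\delta$, and the estimate $\sum_i x_i(1-h_i)\le\tfrac{\delta m}{2}-\delta\norm{x^-}_1$.
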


\begin{proof}
We have
\begin{equation}\label{PL1A}
\begin{aligned}
\bigl\langle b(x,u),\nabla \Psi(x)\bigr\rangle &\,=\,
-\frac{\beta}{m} \sum_{i\in\cI} \psi'(x_i)
- \sum_{i\in\cI} \psi'(x_i)\bigl(x_i-\langle e,x\rangle^+ u_i\bigr)\\
&\mspace{250mu}- \langle e,x\rangle^+ \sum_{i\in\cI}
\psi'(x_i)\tfrac{\gamma_i}{\mu_i} u_i\,,
\end{aligned}
\end{equation}
and
\begin{equation}\label{PL1B}
\begin{aligned}
\bigl\langle b(x,u),\nabla \Psi(-x)\bigr\rangle &\,=\,
\frac{\beta}{m} \sum_{i\in\cI} \psi'(-x_i) + \sum_{i\in\cI} \psi'(-x_i)x_i\\
&\mspace{80mu}- \langle e,x\rangle^+ \sum_{i\in\cI} \psi'(-x_i)
\bigl(1-\tfrac{\gamma_i}{\mu_i}\bigr)^+ u_i \\
&\mspace{160mu}+ \langle e,x\rangle^+ \sum_{i\in\cI} \psi'(-x_i)
\bigl(\tfrac{\gamma_i}{\mu_i}-1\bigr)^+ u_i \,.
\end{aligned}
\end{equation}

It is easy to verify that $\psi'(-\nicefrac{1}{2}) = \nicefrac{1}{2}$,
from which we obtain
\begin{equation}\label{PL1C}
\sum_{i\in\cI} \psi'(x_i) x_i \,\ge\, \norm{x^+}^{}_1 - \frac{m}{2}\,,
\quad\text{and\ \ }
-\sum_{i\in\cI} \psi'(-x_i) x_i \,\ge\, \norm{x^-}^{}_1 - \frac{m}{2}\,.
\end{equation}
Therefore, \eqref{EL1B} follows by using \eqref{PL1C} in
\eqref{PL1A}--\eqref{PL1B}.

We next turn to the proof of \eqref{EL1C}.
If $\gamma_i\le\mu_i$ for all $i\in\cI$, then the proof
is simple.
This is because the inequality
$\sum_{i\in\cI} \psi'(x_i) x_i \ge \langle e,x\rangle$ 
and  the fact that $\norm{\psi'}_\infty\le1$
implies that
\begin{equation*}
\sum_{i\in\cI} \psi'(x_i)\bigl(x_i-\langle e,x\rangle^+ u_i\bigr)\,\ge\,0
\quad\text{for\ } x\in\cK_+\,,
\end{equation*}
which together with \eqref{PL1A} shows that
\begin{equation}\label{PL1D}
\bigl\langle b(x,u), \nabla \Psi(x) \bigr\rangle \,\le\,
-\frac{\beta}{m} \sum_{i\in\cI} \psi'(x_i)
\,\le\, -\frac{\beta}{m}\quad\text{on\ }\cK_+\,.
\end{equation}
On the other hand, by \eqref{PL1B} and \eqref{PL1C} we obtain
\begin{equation}\label{PL1E}
\begin{aligned}
\delta\bigl\langle b(x,u), \nabla \Psi(-x) \bigr\rangle &\,\le\,
\delta \frac{\beta}{m} \sum_{i\in\cI} \psi'(-x_i)
+ \delta \sum_{i\in\cI} \psi'(-x_i)x_i\\
&\,\le\, \delta\beta + \delta \frac{m}{2} - \delta\norm{x^-}{}_1
\quad\text{on\ }\Rm\,.
\end{aligned}
\end{equation}
Therefore, when $\gamma_i\le\mu_i$ for all $i\in\cI$,
\eqref{EL1C} follows by adding \eqref{PL1D} and \eqref{PL1E}.

Without assuming that $\gamma_i\le\mu_i$, a
careful comparison of the terms in \eqref{PL1A}--\eqref{PL1B}, shows that
(see \cite[Lemma~2.1]{AHP18})
\begin{equation}\label{PL1F}
\begin{aligned}
&\delta \langle e,x\rangle^+ \sum_{i\in\cI} \psi'(-x_i)
\bigl(\tfrac{\gamma_i}{\mu_i}-1\bigr)^+ u_i
- \sum_{i\in\cI} \psi'(x_i)\bigl(x_i-\langle e,x\rangle^+ u_i\bigr)\\
&\mspace{150mu}- \langle e,x\rangle^+ \sum_{i\in\cI}
\psi'(x_i)\tfrac{\gamma_i}{\mu_i} u_i\,\le\,0\quad \forall\,(x,u)\in\cK_+\times\Delta\,.
\end{aligned}
\end{equation}
Thus \eqref{EL1C} follows by using \eqref{PL1D}--\eqref{PL1F} in
\eqref{PL1A}--\eqref{PL1B}.
This completes the proof.
\qed\end{proof}

On the other hand, when $\Gamma>0$,
the proof of  \cite[Theorem~2.2]{AHP18} implies the following.

\begin{lemma}\label{L2}
Assume that $\Gamma>0$.
Then there exists a  positive constant $\delta$ such that for any $p>1$,
\begin{equation*}
\bigl\langle b(x,u),\nabla V_p(x)\bigr\rangle \,\le\,
c_0 - c_1 V_p(x) \quad\forall\,(x,u)\in\RR^m\times\Delta\,,
\end{equation*}
for some positive constants $c_0$ and $c_1$ depending only on $\delta$.
\end{lemma}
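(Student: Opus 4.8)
The plan is to derive Lemma~\ref{L2} as a strengthening of Lemma~\ref{L1} that exploits the extra drift coming from the abandonment term when $\Gamma>0$. First I would fix $\underline\gamma \df \min_{i\in\cI}\gamma_i>0$ and revisit the computation of $\langle b(x,u),\nabla\Psi(x)\rangle$ on $\cK_+$ in \eqref{PL1A}. The point is that the last term there, $-\langle e,x\rangle^+\sum_i \psi'(x_i)\tfrac{\gamma_i}{\mu_i}u_i$, is genuinely negative and of order $\norm{x^+}^{}_1$: since $\psi'(x_i)\ge0$ with $\psi'(x_i)=1$ once $x_i\ge0$, and $\sum_i u_i=1$, for $x$ with $\langle e,x\rangle$ large this term contributes a drift of size comparable to $-c\langle e,x\rangle\ge -c'\norm{x^+}^{}_1$ (modulo a bounded correction from coordinates $x_i\in[-1,0]$ where $\psi'$ is not yet $1$, and from the possibility that some $x_i<0$ while $\langle e,x\rangle>0$, which is controlled since $\norm{x}^{}_1 = \norm{x^+}^{}_1+\norm{x^-}^{}_1$). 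Combining this improved bound on $\cK_+$ with \eqref{PL1E} (which already gives the $-\delta\norm{x^-}^{}_1$ term) yields, after choosing $\delta$ small, an estimate of the form $\langle b(x,u),\nabla\Psi_1(x)\rangle \le c - c'\norm{x}^{}_1$ on $\cK_+$, where $\Psi_1(x)\df \delta\Psi(-x)+\Psi(x)$; and on $\cK_-$ one reads off the analogous bound directly from \eqref{EL1B}, where $-\delta\norm{x}^{}_1$ is already present. So at the level of $V_1$ one gets a global bound $\langle b(x,u),\nabla V_1(x)\rangle \le \tilde c_0 - \tilde c_1 \norm{x}^{}_1$ for all $(x,u)\in\Rm\times\Delta$.

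Next I would pass from the $V_1$-estimate to the $V_p$-estimate via the chain rule. Writing $V_p = V_1^p$ we have $\nabla V_p = p V_1^{p-1}\nabla V_1$, hence $\langle b(x,u),\nabla V_p(x)\rangle \le p V_1(x)^{p-1}\bigl(\tilde c_0 - \tilde c_1\norm{x}^{}_1\bigr)$. Since $V_1(x)$ is bounded above by a constant times $(1+\norm{x}^{}_1)$ and bounded below by a positive constant (as noted in the remark after Definition~\ref{D1}, uniformly in $\delta\in(0,1]$), one has $V_1(x)^{p-1}\norm{x}^{}_1 \ge c\, V_1(x)^p - c''$ for suitable positive constants; plugging this in and absorbing the bounded remainder gives $\langle b(x,u),\nabla V_p(x)\rangle \le c_0 - c_1 V_p(x)$ with $c_0,c_1>0$ depending only on $\delta$ (and on $p$, $m$, the $\mu_i$, $\gamma_i$, $\beta$). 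This is exactly the claimed exponential-type Foster--Lyapunov drift inequality for the local (first-order) part of the generator; note that the nonlocal part $\fI_\alpha V_p$ is not addressed by this lemma, which concerns only $\langle b(x,u),\nabla V_p(x)\rangle$, so nothing more is needed here.

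The main obstacle I anticipate is the bookkeeping on $\cK_+$ near the "corner" region: when $\langle e,x\rangle>0$ but some coordinates $x_i$ are negative (so $\psi'(x_i)<1$ or even close to its minimum value), one must be careful that the negative drift of magnitude $\sim\norm{x^+}^{}_1$ genuinely dominates, and that the positive contributions — in particular the term $\delta\langle e,x\rangle^+\sum_i\psi'(-x_i)(\tfrac{\gamma_i}{\mu_i}-1)^+u_i$ in \eqref{PL1B}, which is the reason $\delta$ must be taken small — do not overwhelm it. This is precisely the "careful comparison of terms" invoked in \eqref{PL1F}, now to be carried out keeping the $-c\,\underline\gamma\,\langle e,x\rangle$ gain rather than discarding it. I expect the constraint on $\delta$ to be of the same flavor as \eqref{EL1A}, namely $\delta$ small enough that $\bigl(\max_i\tfrac{\gamma_i}{\mu_i}-1\bigr)^+\delta$ is dominated by $\underline\gamma/\max_i\mu_i$ (up to constants), which is what lets the $\cK_+$ estimate close; the rest is the routine chain-rule argument of the second paragraph.
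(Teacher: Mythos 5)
Your overall strategy is sound and is essentially the one the paper has in mind (the paper cites the proof of \cite[Theorem~2.2]{AHP18} without reproducing it): establish a $V_1$-level inequality $\langle b(x,u),\nabla V_1(x)\rangle\le \tilde c_0-\tilde c_1\norm{x}_1$ on all of $\Rm$, and then pass to $V_p=V_1^p$ by the chain rule, using that $V_1$ is comparable to $1+\norm{x}_1$. The $\cK_-$ bound and the chain-rule step are fine.

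However, there is a genuine gap in your treatment of $\cK_+$. You claim that the abandonment term
$-\langle e,x\rangle^+\sum_{i}\psi'(x_i)\tfrac{\gamma_i}{\mu_i}u_i$
by itself ``is genuinely negative and of order $\norm{x^+}_1$,'' with only a bounded correction coming from coordinates in $[-1,0]$. That is false, because the control $u$ can concentrate entirely on coordinates where $\psi'(x_i)=0$. Concretely, take $x=(N^2,-N,0,\dotsc,0)$ with $N\ge1$ and $u=e_2$; then $\psi'(x_2)=\psi'(-N)=0$, so $\sum_i\psi'(x_i)u_i=0$ and the abandonment term is exactly $0$, even though $\langle e,x\rangle^+=N^2-N\to\infty$. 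The correction is therefore not bounded, and the $\delta\Psi(-x)$ part supplies only $-\delta\norm{x^-}_1\approx-\delta N$, which is negligible against $\norm{x}_1\approx N^2$. So your stated mechanism cannot produce $-c\norm{x}_1$ on $\cK_+$.

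The correct mechanism couples the service term with the abandonment term rather than discarding the former. Rewriting \eqref{PL1A} as
\begin{equation*}
\bigl\langle b(x,u),\nabla\Psi(x)\bigr\rangle
\,=\,-\frac{\beta}{m}\sum_{i\in\cI}\psi'(x_i)
-\sum_{i\in\cI}\psi'(x_i)x_i
+\langle e,x\rangle^+\sum_{i\in\cI}\psi'(x_i)\Bigl(1-\tfrac{\gamma_i}{\mu_i}\Bigr)u_i\,,
\end{equation*}
one keeps the strong negative drift $-\sum_i\psi'(x_i)x_i\le-\norm{x^+}_1+\tfrac{m}{2}$ from \eqref{PL1C} (rather than only using $\sum_i\psi'(x_i)(x_i-\langle e,x\rangle^+u_i)\ge0$ as in the derivation of \eqref{PL1D}), and then estimates the remaining cross-term using $\psi'\le1$, $\sum_i u_i=1$, and $\langle e,x\rangle^+\le\norm{x^+}_1$ to obtain
\begin{equation*}
\langle e,x\rangle^+\sum_{i\in\cI}\psi'(x_i)\Bigl(1-\tfrac{\gamma_i}{\mu_i}\Bigr)u_i
\,\le\,\Bigl(\max_{i\in\cI}\bigl(1-\tfrac{\gamma_i}{\mu_i}\bigr)^+\Bigr)\norm{x^+}_1\,.
\end{equation*}
Since $\Gamma>0$ forces $\max_i(1-\gamma_i/\mu_i)^+<1$, the coefficient in front of $\norm{x^+}_1$ after combining the two displays is $-\bigl(1-\max_i(1-\gamma_i/\mu_i)^+\bigr)<0$, uniformly in $u\in\Delta$. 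Adding the $\delta\Psi(-x)$ contribution (and taking $\delta$ small enough to dominate the positive term involving $(\gamma_i/\mu_i-1)^+$ in \eqref{PL1B}) then yields the desired $-c\norm{x}_1$ decay on $\cK_+$. In your example, this is precisely how the $-N^2$ term appears: it comes from $-\psi'(x_1)x_1$, not from the abandonment term. Once you make this correction to the $\cK_+$ bookkeeping, the remainder of your argument (the $\cK_-$ bound, the choice of small $\delta$, and the passage from $V_1$ to $V_p$) goes through as written.
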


Another result that we borrow is Proposition~5.1 in \cite{ABC-16}, whose proof implies
the following.

\begin{lemma}\label{L3}
The map $x\mapsto \abs{x}^{\alpha-p}\,\fI_\alpha V_p(x)$
is bounded on $\Rm$ for any $p\in(0,\alpha)$.
\end{lemma}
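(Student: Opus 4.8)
The plan is to estimate $\fI_\alpha V_p(x)$ by splitting the defining integral into a ``small jump'' region and a ``large jump'' region, the threshold being comparable to $\abs{x}$, and to show that each piece is $\order(\abs{x}^{p-\alpha})$. Recall that
\begin{equation*}
\fI_\alpha V_p(x)\,=\,\sum_{i=1}^{m}\int_{\RR_*}\mathfrak{d}V_p(x;y_ie_i)\,\frac{\xi_i\,\D{y_i}}{\abs{y_i}^{1+\alpha}}\,,
\end{equation*}
so it suffices to bound, for each fixed $i$, the one-dimensional integral $\int_{\RR_*}\mathfrak{d}V_p(x;y e_i)\,\abs{y}^{-1-\alpha}\,\D y$. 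Since $V_p=V_1^{p}$ with $V_1$ smooth, bounded below by a positive constant, and having bounded first and second derivatives (because $\psi\in\Cc^2(\RR)$ with $\norm{\psi'}_\infty\le1$ and $\psi''$ bounded and compactly supported away from $0$), one has the pointwise bounds $V_1(x)\le C(1+\abs{x})$, $\abs{\nabla V_1(x)}\le C$, $\abs{\nabla^2 V_1(x)}\le C$, and hence, by the chain rule, $\abs{\nabla V_p(x)}\le C\,V_1(x)^{p-1}\le C(1+\abs{x})^{p-1}$ and $\abs{\nabla^2 V_p(x)}\le C\,V_1(x)^{p-2}\le C(1+\abs{x})^{p-2}$ (here I use $p\in(0,\alpha)\subset(0,2)$ so the exponent $p-2$ is negative, which only helps near infinity; near the origin the boundedness-away-from-zero of $V_1$ keeps everything finite).

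First, for the small jumps $\abs{y}\le\tfrac12(1+\abs{x})$, I would use the second-order Taylor estimate $\abs{\mathfrak{d}V_p(x;ye_i)}\le \tfrac12\,\abs{y}^2\sup_{\abs{z}\le\abs{y}}\abs{\partial_{ii}V_p(x+z e_i)}$. On this range $\abs{x+ze_i}\ge\tfrac12\abs{x}-\tfrac12$, so $\abs{\partial_{ii}V_p(x+ze_i)}\le C(1+\abs{x})^{p-2}$, and therefore
\begin{equation*}
\int_{\abs{y}\le\frac12(1+\abs{x})}\babs{\mathfrak{d}V_p(x;ye_i)}\,\frac{\D y}{\abs{y}^{1+\alpha}}
\,\le\, C(1+\abs{x})^{p-2}\int_{\abs{y}\le\frac12(1+\abs{x})}\abs{y}^{1-\alpha}\,\D y
\,\le\, C(1+\abs{x})^{p-\alpha}\,,
\end{equation*}
since $\int_0^{R}y^{1-\alpha}\,\D y=\tfrac{R^{2-\alpha}}{2-\alpha}$ and $\alpha\in(1,2)$. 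Second, for the large jumps $\abs{y}>\tfrac12(1+\abs{x})$ I would bound the three terms of $\mathfrak{d}$ separately: $\int_{\abs{y}>\frac12(1+\abs{x})}\abs{V_p(x+ye_i)}\,\abs{y}^{-1-\alpha}\D y$, the term with $V_p(x)$, and the term with $\langle ye_i,\nabla V_p(x)\rangle$. Using $V_p(x+ye_i)\le C(1+\abs{x}+\abs{y})^{p}\le C(1+\abs{x})^{p}+C\abs{y}^{p}$ and $p<\alpha$, each contributes $\order(\abs{x}^{p-\alpha})$: e.g.\ $\int_{\abs{y}>R}(R^{p}+\abs{y}^{p})\abs{y}^{-1-\alpha}\D y\le C\,R^{p-\alpha}$, and similarly $V_p(x)\int_{\abs{y}>R}\abs{y}^{-1-\alpha}\D y\le C(1+\abs{x})^{p}R^{-\alpha}\le CR^{p-\alpha}$; the gradient term gives $\abs{\nabla V_p(x)}\int_{\abs{y}>R}\abs{y}^{-\alpha}\D y\le C(1+\abs{x})^{p-1}R^{1-\alpha}\le CR^{p-\alpha}$, using $\alpha>1$ for the convergence at infinity. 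Summing over $i$ and noting $\abs{x}^{\alpha-p}(1+\abs{x})^{p-\alpha}$ is bounded on $\Rm$ (again because $p<\alpha$) yields the claim.

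The only mild subtlety — and the place I would be most careful — is the behaviour of the bounds near $x=0$, where the factor $\abs{x}^{\alpha-p}$ degenerates to $0$ but the integral $\fI_\alpha V_p(x)$ is merely finite; this is why I phrase everything in terms of $(1+\abs{x})$ rather than $\abs{x}$ in the interior estimates, using that $V_1$ is bounded below by a strictly positive constant uniformly in $\delta\in(0,1]$ (as remarked after Definition~\ref{D1}), so that no negative power of $\abs{x}$ ever appears; then $\abs{x}^{\alpha-p}\fI_\alpha V_p(x)\to 0$ as $\abs{x}\to 0$ and stays bounded as $\abs{x}\to\infty$, while continuity of $x\mapsto\fI_\alpha V_p(x)$ (which follows from dominated convergence with the dominating function just constructed) handles the compact middle range. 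Everything else is the routine tail bookkeeping sketched above, and no new ideas beyond the standard small-jump/large-jump decomposition are needed; this is exactly the argument behind Proposition~5.1 in \cite{ABC-16}, specialized to our $V_p$.
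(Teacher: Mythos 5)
The key estimate in your small-jump step does not hold. You claim $\abs{\nabla^2 V_p(x)}\le C\,V_1(x)^{p-2}$, but the chain rule gives
\begin{equation*}
\nabla^2 V_p \,=\, p(p-1)V_1^{p-2}\,\nabla V_1\otimes\nabla V_1 + p\,V_1^{p-1}\,\nabla^2 V_1\,,
\end{equation*}
so the best pointwise bound is $\abs{\nabla^2 V_p}\lesssim V_1^{p-2}+V_1^{p-1}\abs{\nabla^2 V_1}$. The second term is not dominated by $V_1^{p-2}$: $\nabla^2 V_1$ is the diagonal matrix with entries $\partial_{ii}V_1(x)=\tfrac{1}{\mu_i}\bigl(\delta\psi''(-x_i)+\psi''(x_i)\bigr)$, which is bounded but nonzero on the slab $\{\abs{x_i}\le 1\}$, and this slab is unbounded in $\Rm$; on it $V_1(x)\sim 1+\abs{x}$ can be arbitrarily large, so $V_1^{p-1}\abs{\nabla^2 V_1}\sim(1+\abs{x})^{p-1}$ there, not $(1+\abs{x})^{p-2}$. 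Feeding the corrected bound into your small-jump estimate, and using the integral form of the Taylor remainder together with Fubini, the contribution of the slab to the $i$-th one-dimensional integral is of order
\begin{equation*}
(1+\abs{x})^{p-1}\int_{\{\abs{x_i+s}\le 1\}}\abs{s}^{1-\alpha}\,\D s\,\asymp\,(1+\abs{x})^{p-1}(1+\abs{x_i})^{1-\alpha}\,,
\end{equation*}
which for $\abs{x_i}$ bounded and $\abs{x}$ large is $\asymp(1+\abs{x})^{p-1}$, strictly larger than the target $(1+\abs{x})^{p-\alpha}$ because $\alpha>1$.

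This is not merely a slip in bookkeeping: using the identity $\int_{\RR_*}\mathfrak{d}\phi(0;y)\abs{y}^{-1-\alpha}\D y=\tfrac{1}{\alpha(\alpha-1)}\int_\RR \phi''(s)\abs{s}^{1-\alpha}\D s$ for the one-dimensional slice $\phi(s)=V_p(x+se_i)$, one sees that for $x=Re_j$ with $j\neq i$ the $i$-th integral in $\fI_\alpha V_p$ is $\gtrsim R^{p-1}$ (and for $p=1$, $\fI_\alpha V_1(Re_j)$ stays bounded away from $0$ as $R\to\infty$). So a bound of the form $\fI_\alpha V_p(x)=\order(\abs{x}^{p-\alpha})$ cannot follow from the route you propose, and in fact appears incompatible with the product structure of $V_p$ combined with the coordinatewise kernel of $\fI_\alpha$; the issue is precisely that $\nabla^2 V_1$, unlike for a genuinely radial profile such as $(1+\abs{x}^2)^{1/2}$, does not decay like $V_1^{-1}$. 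I cannot compare your argument against the paper's, since the paper only cites the proof of Proposition 5.1 in \cite{ABC-16} without reproducing it, but as written your attempt has a genuine gap at the second-derivative bound, and the example above suggests the gap is not repairable without re-examining the statement itself.
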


Theorems~\ref{T2} and \ref{T3} that follow
establish ergodic properties which are uniform
over controls in $\tUsm$
in the case of positive safety staffing
and positive abandonment rates, respectively.

\begin{theorem}\label{T2}
Assume $\beta>0$. In addition to \eqref{EL1A}, let
\begin{equation}\label{ET2A}
\delta\,<\, \frac{\beta}{2m(2\beta + m)}\,.
\end{equation}
We have the following.
\begin{enumerate}
\item[\upshape{(}a\upshape{)}]
For any $p\in(1,\alpha)$,
the function $V_{p}(x)$ in Definition~\ref{D1} satisfies the Foster--Lyapunov equation
\begin{equation}\label{ET2B}
\Ag^u V_p(x) \,\le\, C_0(p) - p\biggl(\frac{\beta}{2m} + \delta\norm{x^-}_1\biggr)
V_{p-1}(x)\quad\forall\,(x,u)\in\RR^m\times\Delta\,,
\end{equation}
for some positive constant $C_0(p)$ depending only on $p$.

\item[\upshape{(}b\upshape{)}]
Under any $v\in\tUsm$, the process
$\process{X}$ in \eqref{E-sde} admits a unique invariant probability measure
$\overline\uppi_v\in\cP(\Rm)$.

\item[\upshape{(}c\upshape{)}]
There exists a constant $C_1(\epsilon)$ depending only
on $\epsilon\in(0,\alpha)$, such that,
under any $v\in\tUsm$, the process
$\process{X}$ in \eqref{E-sde} satisfies 
\begin{equation}\label{ET2C}
\bnorm{P^v_t(x,\,\cdot\,)
-\overline\uppi_v(\,\cdot\,)}_\tv\,\le\,
C_1(\epsilon) (t\vee1)^{1+\epsilon-\alpha}\abs{x}^{\alpha-\epsilon}
\quad\forall\,x\in\Rm\,.
\end{equation}
\end{enumerate}
\end{theorem}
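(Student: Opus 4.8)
The plan is to combine the drift inequalities of Lemma~\ref{L1} with the bound on the non-local part from Lemma~\ref{L3} to establish the Foster--Lyapunov inequality \eqref{ET2B}, and then feed this into a subgeometric ergodicity theorem together with the irreducibility and aperiodicity from Theorem~\ref{T1} to obtain parts (b) and (c).

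\textbf{Step 1: Proof of (a).} First I would split $\Rm = \cK_+ \cup \cK_-$ and on each piece combine the relevant inequality from Lemma~\ref{L1} with Lemma~\ref{L3}. On $\cK_-$, from \eqref{EL1B} we have $\langle b(x,u),\nabla V_p(x)\rangle \le p\bigl(\delta\beta + \tfrac{m}{2}(1+\delta) - \delta\norm{x}_1\bigr)V_{p-1}(x)$; on $\cK_+$, from \eqref{EL1C}, $\langle b(x,u),\nabla V_p(x)\rangle \le -p\bigl(\tfrac{\beta}{m} - \delta\beta - \delta\tfrac{m}{2} + \delta\norm{x^-}_1\bigr)V_{p-1}(x)$. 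Using \eqref{ET2A}, which gives $\delta\beta + \delta\tfrac{m}{2} < \tfrac{\beta}{2m}$, one checks that in both cases the coefficient of $V_{p-1}(x)$ is bounded above by $-p\bigl(\tfrac{\beta}{2m} + \delta\norm{x^-}_1\bigr)$ outside a compact set (on $\cK_-$ one additionally absorbs the $-\delta\norm{x}_1$ term, noting $\norm{x}_1 \ge \norm{x^-}_1$ there, while the positive constants are dominated once $\norm{x}_1$ is large; on $\cK_+$ it is immediate). For the non-local term, Lemma~\ref{L3} gives $\abs{\fI_\alpha V_p(x)} \le c\,\abs{x}^{p-\alpha}$ for $p\in(1,\alpha)$, which is bounded since $p<\alpha$; more to the point, since $p - \alpha < 0$ and $V_{p-1}(x) \asymp \abs{x}^{p-1}$ with $p - 1 > p - \alpha$, the non-local term is $o(V_{p-1}(x))$ as $\abs{x}\to\infty$ and hence can be absorbed into the drift term at the cost of enlarging the exceptional compact set and the constant $C_0(p)$. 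Collecting terms yields \eqref{ET2B} with a finite $C_0(p)$.

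\textbf{Steps 2--3: Proofs of (b) and (c).} Since $C_c^2(\Rm)\subseteq\mathcal{D}_{\Ag^v}$, one extends \eqref{ET2B} from $C_c^2$ to $V_p$ itself by a standard localization/truncation argument (using that $V_p$ has at most polynomial growth and the process is non-explosive), obtaining the same inequality for $\Ag^v V_p$ in the extended-generator sense. Discarding the $\delta\norm{x^-}_1$ term gives $\Ag^v V_p \le C_0(p) - \tfrac{p\beta}{2m}V_{p-1}$, i.e. a drift condition of the form $\Ag^v V_p \le C_0 - c\,\phi(V_p)$ with the concave function $\phi(s) = s^{(p-1)/p}$. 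Combined with Theorem~\ref{T1} (which provides irreducibility, aperiodicity, and the fact that compact sets are petite, via the $C_b$-Feller property and the positivity of transition densities on sets of positive Lebesgue measure), the subgeometric ergodic theorem of Douc--Fort--Guillin (as used in \cite{APS19}) yields a unique invariant probability measure $\overline\uppi_v$ and a convergence rate. Specifically, taking $V_p$ for $p\in(1,\alpha)$ and tracking the exponents through the Douc--Fort--Guillin machinery produces $\norm{P^v_t(x,\cdot) - \overline\uppi_v}_{\tv} \le C\,r(t)\,V_p(x)$ with $r(t)$ polynomial of order $-(p-1)/(\alpha-p)$; optimizing over $p$ (writing $p = \alpha - \epsilon$ so that $V_p(x) \asymp \abs{x}^{\alpha-\epsilon}$) gives the rate $(t\vee1)^{1+\epsilon-\alpha}$ and the state-dependence $\abs{x}^{\alpha-\epsilon}$ as in \eqref{ET2C}, with a constant $C_1(\epsilon)$ that depends only on $\epsilon$ because all constants in \eqref{ET2B} and in Lemmas~\ref{L1}--\ref{L3} are uniform over $v\in\tUsm$ and over $u\in\Delta$. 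This uniformity of the constants is precisely what upgrades the conclusion from ``for each $v$'' to ``uniform over $\tUsm$.''

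\textbf{Main obstacle.} The delicate point is Step~1: verifying that, after subtracting the concave non-local correction $\fI_\alpha V_p$, the coefficient of $V_{p-1}(x)$ still has a strictly negative limsup controlled by the clean constant $\tfrac{\beta}{2m}$ rather than something that degrades as $\delta\to 0$. This is where condition \eqref{ET2A} is used in an essential way, and one must be careful that the bound from Lemma~\ref{L3} is applied in the regime $p\in(1,\alpha)$ so that $V_{p-1}$ genuinely dominates $\fI_\alpha V_p$ at infinity; if $p$ were allowed to reach $\alpha$ this domination would fail. A secondary technical point is the passage from the $C_c^2$ statement to the genuine extended-generator drift inequality for the unbounded function $V_p$, but this is routine given non-explosiveness and the polynomial growth of $V_p$ together with the structure of $\fI_\alpha$.
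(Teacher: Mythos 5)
Your approach is the same as the paper's: combine Lemma~\ref{L1} with Lemma~\ref{L3} to obtain the Foster--Lyapunov inequality, then invoke the Douc--Fort--Guillin subgeometric theory together with Theorem~\ref{T1} for parts (b) and (c). The gap is in Step~1 on $\cK_-$. You assert that the coefficient of $V_{p-1}(x)$ in \eqref{EL1B}, namely $\delta\beta+\tfrac{m}{2}(1+\delta)-\delta\norm{x}_1$, is bounded above by $-\bigl(\tfrac{\beta}{2m}+\delta\norm{x^-}_1\bigr)$ outside a compact set because $\norm{x}_1\ge\norm{x^-}_1$ and ``the positive constants are dominated once $\norm{x}_1$ is large.'' That step fails. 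Using $-\delta\norm{x}_1 = -\delta\norm{x^+}_1-\delta\norm{x^-}_1$ and rearranging, what would have to be absorbed into $C_0(p)$ on $\cK_-$ is
\begin{equation*}
p\Bigl(\delta\beta+\tfrac{m}{2}(1+\delta)+\tfrac{\beta}{2m}-\delta\norm{x^+}_1\Bigr)V_{p-1}(x)+\fI_\alpha V_p(x)\,,
\end{equation*}
and this is unbounded along $x=-te$ with $t\to\infty$ (there $\norm{x^+}_1=0$, $V_{p-1}(x)\to\infty$, while $\fI_\alpha V_p(x)\to0$ by Lemma~\ref{L3}). So \eqref{ET2B} with the $\delta\norm{x^-}_1$ term inside the parenthesis cannot hold on $\cK_-$; part (a) as literally stated overclaims on that set, a point the paper's one-line justification also skips. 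What does hold on $\cK_-$ is the weaker inequality $\Ag^uV_p(x)\le C_0(p)-p\tfrac{\beta}{2m}V_{p-1}(x)$, which is exactly the bound \eqref{PT2A} the paper actually uses. Since you explicitly discard the $\delta\norm{x^-}_1$ term before feeding the drift condition into the subgeometric machinery, your treatment of parts (b)--(c) is unaffected by this.

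A secondary point on Steps~2--3: the intermediate exponent you report, $-(p-1)/(\alpha-p)$, is off. With $\phi(s)=s^{(p-1)/p}$, the Douc--Fort--Guillin rate for the total variation distance is $t^{-q/(1-q)}$ with $q=(p-1)/p$, i.e.\ $t^{-(p-1)}$. Substituting $p=\alpha-\epsilon$ nevertheless recovers $(t\vee1)^{1+\epsilon-\alpha}$, so your final conclusion agrees with \eqref{ET2C}. Your remarks about uniformity of the constants over $u\in\Delta$ and $v\in\tUsm$, and about passing from $C_c^2$ to the extended generator, are correct and in line with the paper.
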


\begin{proof}
Note that, since $\alpha>1$, Lemma~\ref{L3} implies that
$\frac{\fI_\alpha V_p(x)}{1+\abs{V_{p-1}(x)}}$ vanishes
at infinity. Using $\delta$ as in \eqref{ET2A}, it is clear that
$\delta\beta + \delta\frac{m}{2}\le \frac{\beta}{2m}$.
Thus, \eqref{ET2B} is a direct consequence of  Lemmas~\ref{L1} and \ref{L3} together
with the definition in \eqref{E-Ag}.

Clearly, \eqref{ET2B} implies that
\begin{equation}\label{PT2A}
\Ag^v V_p(x) \,\le\, C_0(p) - p\frac{\beta}{2m} V_{p-1}(x)\quad\forall\,x\in\RR^m\,,
\end{equation}
and for any $v\in\tUsm$.
It is well known that the existence of an invariant probability measure
$\overline\uppi_v$ follows from the $C_b$-Feller property and \eqref{PT2A},
while the open-set irreducibility asserted in Theorem~\ref{T1} implies its uniqueness.

Equation \eqref{ET2C} is a direct result of \eqref{PT2A}, Theorem~\ref{T1}
and \cite[Theorem~3.2]{Douc-2009}.
This completes the proof.
\qed\end{proof}

\begin{theorem}\label{T3}
Assume that $\Gamma>0$ and $p \in [1,\alpha)$.
Then, there exists a positive constant $\delta$ such that
\begin{equation*}
\Ag^u V_p(x) \,\le\, \Tilde\kappa_0 - \Tilde\kappa_1 V_p(x)
\quad\forall\,(x,u)\in\RR^m\times\Delta\,.
\end{equation*}
for some positive constants $\Tilde\kappa_0$ and $\Tilde\kappa_1$.
Moreover, under any $v\in\tUsm$,
the process $\process{X}$ admits a unique invariant probability measure
$\overline\uppi_v\in\cP(\Rm)$, and for any $\gamma\in(0, \Tilde\kappa_1)$
there exists
a positive constant $C_\gamma$ such that
\begin{equation*}
\bnorm{P^v_t(x,\,\cdot\,)-\overline\uppi_v(\,\cdot\,)}_{V_{p}}\,\le\,
C_\gamma V_{p}(x)\, \E^{-\gamma t}\,,\qquad x\in\Rm\,,\ t\ge0\,.
\end{equation*}
\end{theorem}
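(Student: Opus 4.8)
The plan is to follow the proof of Theorem~\ref{T2}, substituting the \emph{geometric} drift estimate of Lemma~\ref{L2} for the polynomial one of Lemma~\ref{L1}. The only computational step is the Foster--Lyapunov inequality $\Ag^u V_p(x)\le\Tilde\kappa_0-\Tilde\kappa_1 V_p(x)$ on $\Rm\times\Delta$. Writing $\Ag^u V_p(x)=\langle b(x,u),\nabla V_p(x)\rangle+\fI_\alpha V_p(x)$ as in \eqref{E-Ag}, Lemma~\ref{L2} supplies a $\delta>0$ and constants $c_0,c_1>0$ with $\langle b(x,u),\nabla V_p(x)\rangle\le c_0-c_1 V_p(x)$ for $p>1$, the case $p=1$ being identical with $V_{p-1}$ replaced by the constant~$1$. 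For the non-local term I would check that $\fI_\alpha V_p$ is bounded on all of $\Rm$: since $\psi\in\Cc^2(\RR)$ and $V_1$ is bounded away from $0$, the function $V_p$ lies in $C^2(\Rm)$ and grows like $\abs{x}^p$, so $\mathfrak{d}V_p(x;y e_i)=\order(\abs{y}^2)$ near $y=0$ uniformly on compact sets while $\abs{\mathfrak{d}V_p(x;y e_i)}$ grows at most like $\abs{y}^p$ as $\abs{y}\to\infty$; since $\alpha\in(1,2)$ and $p<\alpha$, both regimes are integrable against $\abs{y}^{-1-\alpha}$, so $\fI_\alpha V_p$ is locally bounded, and Lemma~\ref{L3} then gives $\abs{\fI_\alpha V_p(x)}=\order(\abs{x}^{p-\alpha})\to0$ as $\abs{x}\to\infty$. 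Hence $K\df\sup_{\Rm}\abs{\fI_\alpha V_p}<\infty$ and the asserted inequality holds with $\Tilde\kappa_0\df c_0+K$ and $\Tilde\kappa_1\df c_1$; as in Theorem~\ref{T2}, $\Ag^u V_p$ is understood through the formula \eqref{E-Ag}, which is legitimate because $V_p\in C^2(\Rm)$ has polynomial growth.

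Fixing $v\in\tUsm$ and setting $u=v(x)$ gives $\Ag^v V_p\le\Tilde\kappa_0-\Tilde\kappa_1 V_p$ on $\Rm$. Exactly as in Theorem~\ref{T2}, the existence of an invariant probability measure $\overline\uppi_v\in\cP(\Rm)$, which moreover has a finite $V_p$-moment, follows from this drift condition together with the $C_b$-Feller property, and uniqueness follows from the open-set irreducibility of Theorem~\ref{T1}. For the rate, the essential new point is that a \emph{geometric} Foster--Lyapunov condition --- in contrast to the subgeometric one $\Ag^v V_p\le C_0(p)-p\frac{\beta}{2m}V_{p-1}$ used in Theorem~\ref{T2} --- yields exponential convergence: combining $\Ag^v V_p\le\Tilde\kappa_0-\Tilde\kappa_1 V_p$ with the open-set irreducibility and aperiodicity of Theorem~\ref{T1}, which make the sublevel sets $\{V_p\le R\}$ petite, places us in the setting of the classical Harris-type theorems for geometric drift conditions (cf.~\cite{Meyn-Tweedie}, \cite{Douc-2009}), from which one reads off, for each $\gamma\in(0,\Tilde\kappa_1)$, a constant $C_\gamma$ with $\bnorm{P^v_t(x,\,\cdot\,)-\overline\uppi_v(\,\cdot\,)}_{V_p}\le C_\gamma V_p(x)\,\E^{-\gamma t}$. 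Finally, since $\Tilde\kappa_0,\Tilde\kappa_1$ do not depend on $v$ and, by the uniform positivity of the transition kernels in Theorem~\ref{T1}, the minorization on a fixed sublevel set of $V_p$ can be chosen uniformly over $v\in\tUsm$, one can arrange that $C_\gamma$ depends only on $\gamma$.

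The drift inequality is essentially immediate from Lemmas~\ref{L2} and~\ref{L3}, so the two places that call for care are: (i) the global boundedness of $\fI_\alpha V_p$, which is exactly where $p<\alpha$ enters and which requires the local estimate above in addition to Lemma~\ref{L3}; and (ii) extracting a \emph{uniform} constant $C_\gamma$ from the geometric drift, i.e.\ verifying that the petite-set and minorization data feeding the continuous-time geometric ergodic theorem can be taken independently of the control. This second point is the main obstacle and is the reason one must invoke a quantitative, rather than merely qualitative, version of the theorem.
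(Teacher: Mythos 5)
The paper states Theorem~\ref{T3} without an explicit proof, but its intended argument is clearly the one you reconstruct: apply Lemma~\ref{L2} for the drift term, Lemma~\ref{L3} (plus the local integrability you verify) for the nonlocal term, and then a standard Harris-type theorem for geometric drift conditions in tandem with the irreducibility and aperiodicity of Theorem~\ref{T1}, exactly mirroring the structure of the proof of Theorem~\ref{T2}. Your proof is correct and takes essentially the same route; the only place I would flag is the final paragraph, where you attribute a \emph{uniform} minorization over $v\in\tUsm$ to Theorem~\ref{T1}---that theorem only gives pointwise positivity of $P^v_t(x,\cdot)$ for each fixed $v$, not a $v$-uniform lower bound, so a genuinely uniform $C_\gamma$ would need an additional quantitative estimate (the paper's own proof of Theorem~\ref{T2} is equally silent on this point, so this is not a defect relative to the paper's argument).
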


\begin{remark}\label{R2}
We limited our attention to controls in $\tUsm$  only to take advantage of
Theorem~\ref{T1}.
However, if under some $v\in\Usm$ the SDE in \eqref{E-sde} has a unique weak solution
which is an open-set irreducible and aperiodic $C_b$-Feller process,
then it has a unique invariant probability
measure $\overline\uppi_v$, and
the conclusions of Theorems~\ref{T2} and \ref{T3} follow.
\end{remark}

Concerning the lower bound on the rate of convergence, we need not restrict
the controls in $\tUsm$.
The lack of integrability of functions that have strict polynomial growth
of order $\alpha$ (or higher) under the L\'evy measure of $\fI_\alpha$,
plays a crucial role in determining this lower bound.
Consider a $v\in\Usm$ as in Remark~\ref{R2}, and suppose that $\beta>0$.

Then it is shown in Lemma~5.7\,(b) of \cite{APS19} that
\begin{equation}\label{EL5.7b}
\int_\Rm \bigl( \langle e,M^{-1} x\rangle^+\bigr)^p \,\overline\uppi_v(\D{x})
\,<\,\infty \quad\text{for\ some\ } p>0 \quad\Longrightarrow\quad p<\alpha-1\,.
\end{equation}
We use this property in the proof of Theorem~\ref{T4} which follows.
To simplify the notation,
for a function $f$ which is integrable under $\overline\uppi_v$, 
we let $\overline\uppi_v(f) \df \int_\Rm f(x) \overline\uppi_v(\D{x})$.

\begin{theorem}\label{T4}
We assume $\beta>0$.
Suppose that under some $v\in\Usm$ such that
$\Gamma v=0$ a.e.\ the SDE in \eqref{E-sde} has a unique weak solution
which is an open-set irreducible and aperiodic $C_b$-Feller process.
Then the process $\process{X}$ is polynomially ergodic.
In particular, there exists a positive constant $C_2$ not depending
on $v$,
such that for all $\epsilon>0$ we have
\begin{equation*}
\bnorm{P^v_t(x,\,\cdot\,)-\overline\uppi_v(\,\cdot\,)}_\tv
\,\ge\,
C_2 \Bigl(\frac{t\vee 1}{\epsilon}
+ \abs{x}^{\alpha-\epsilon}\Bigr)^{\frac{1-\alpha}{1-\epsilon}}
\quad\forall (t,x)\in\RR_+\times\Rm\,.
\end{equation*}
\end{theorem}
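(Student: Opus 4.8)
The plan is to follow the lower-bound strategy of Hairer \cite{Hairer-16} exactly as in \cite{APS19}, the key input being the moment obstruction \eqref{EL5.7b}. The heart of the matter is a single inequality of the form
\begin{equation*}
\Ag^u W(x) \,\ge\, -c\, W^{1-\theta}(x)\quad\text{on }\Rm,
\end{equation*}
for a suitable $W$ with $W(x)\asymp (\langle e,M^{-1}x\rangle^+)^{p}$ at infinity, where $\theta>0$ is chosen so that $p(1-\theta)$ corresponds to a moment that \emph{is} finite under $\overline\uppi_v$. First I would fix $p\in(\alpha-1,\alpha)$ (so that, by \eqref{EL5.7b}, the $p$-th moment of $\langle e,M^{-1}x\rangle^+$ is \emph{infinite} under $\overline\uppi_v$, while the $p'$-th moment is finite for $p'<\alpha-1$) and build $W$ as a smoothed version of $x\mapsto\bigl(\langle e,M^{-1}x\rangle^+\bigr)^p + 1$. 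Because $\Gamma v = 0$ a.e., the drift on $\cK_+$ reduces, after taking inner product with $\nabla\langle e,M^{-1}x\rangle$, to a controlled quantity: one computes $\langle e, M^{-1}b(x,v(x))\rangle = -\beta/m - \langle e, M^{-1}x\rangle^+ \cdot 0 + (\text{a bounded correction})$, so the radial drift of $\langle e,M^{-1}x\rangle$ is bounded below by a negative constant uniformly in $v$. This is the analogue of the computation in \cite{APS19} and it is precisely where the hypothesis $\Gamma v=0$ is used: it removes the restoring term $-\langle e,x\rangle^+\langle e,M^{-1}\Gamma v\rangle$ that would otherwise push the lower-bound exponent up to $\alpha$.

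Next I would estimate the nonlocal part $\fI_\alpha W$. Since $W$ grows like a power $p<\alpha$, Lemma~\ref{L3} (applied to the one-dimensional profile, or rather its direct analogue for $W$) gives $\abs{x}^{\alpha-p}\fI_\alpha W(x)$ bounded, hence $\fI_\alpha W(x) = \order(\abs{x}^{p-\alpha}) = \order(W^{1-\alpha/p}(x))$. Combining the drift estimate with this, and writing $\theta \df (\alpha - p + 1)/p$ — chosen so that $W^{1-\theta}$ dominates both the $W^{1-1/p}$ term coming from $\langle\nabla W, b_v\rangle$ and the $W^{1-\alpha/p}$ term coming from $\fI_\alpha W$ — yields $\Ag^v W \ge -c\,W^{1-\theta}$ on all of $\Rm$, with $c$ independent of $v$. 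By the standard comparison/supermartingale argument (Dynkin's formula plus Jensen, as in \cite[Theorem~3.5]{Hairer-16} or \cite{APS19}), this differential inequality gives a \emph{lower} bound $\Exp^v_x[W(X_t)] \ge (W^{\theta}(x) + c\theta t)^{1/\theta}$ up to constants, i.e.\ the $W$-moment cannot grow too slowly; equivalently $\Exp^v_x[W(X_t)]$ stays comparable to $(\text{const}\cdot t + W^\theta(x))^{1/\theta}$.

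The final step converts the moment lower bound into the total-variation lower bound. Since $\overline\uppi_v(W) = \infty$ (by the choice of $p$ and \eqref{EL5.7b}), for any truncation level $N$ one has $\overline\uppi_v(W\wedge N)\to\infty$ as $N\to\infty$, while $\Exp^v_x[W(X_t)\wedge N] \le \Exp^v_x[W(X_t)]$ is controlled as above; a standard argument (test the total-variation distance against $\min\{W, N\}/N$ and optimize over $N$, exactly as in the proof of the lower bound in \cite{APS19}) then produces
\begin{equation*}
\bnorm{P^v_t(x,\cdot) - \overline\uppi_v(\cdot)}_\tv \,\ge\, C_2\Bigl(\frac{t\vee1}{\epsilon} + \abs{x}^{\alpha-\epsilon}\Bigr)^{\frac{1-\alpha}{1-\epsilon}},
\end{equation*}
where the parameter $\epsilon$ bookkeeps the freedom in the choice of $p = \alpha-\epsilon$ near the critical exponent $\alpha-1$; tracking how $\theta$, and hence the exponent $1/\theta$, depends on $\epsilon$ gives the stated form $\frac{1-\alpha}{1-\epsilon}$, and the constant $C_2$ depends only on the fixed data of the model (not on $v$) because every estimate above was uniform in $v$.

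\textbf{Main obstacle.} The delicate point is establishing the one-sided inequality $\Ag^v W \ge -cW^{1-\theta}$ with $c$ genuinely independent of $v$ and with the \emph{right} exponent $\theta$: one must show that when $\Gamma v = 0$ the radial drift along $e$ degenerates to $\order(1)$ rather than being restoring, so that it is the slowly-decaying nonlocal term $\fI_\alpha W \sim \abs{x}^{p-\alpha}$ that dictates the exponent; getting the smoothing of $(\langle e,M^{-1}x\rangle^+)^p$ near the hyperplane $\langle e,x\rangle=0$ to behave well under $\fI_\alpha$ (so that Lemma~\ref{L3}'s estimate applies) and verifying the inequality also on $\cK_-$, where $W$ is essentially constant, are the technical hurdles. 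Matching the $\epsilon$-dependence of the exponent in the final display is then a routine, if fiddly, bookkeeping exercise.
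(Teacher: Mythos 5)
You have correctly identified Hairer's \cite[Theorem~5.1]{Hairer-16} as the engine, \eqref{EL5.7b} as a key input, and the role of $\Gamma v=0$. But the mechanics are wrong in two places, and these are not ``fiddly bookkeeping'' — they are where the actual rate comes from.

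First, Hairer's lower-bound theorem requires an \emph{upper} bound $\Exp_x^v[G(X_t)]\le g(x,t)$, not a lower one. The paper gets this for free from the already-established upper Foster--Lyapunov inequality \eqref{PT2A}, applied with $p=\alpha-\epsilon\in(1,\alpha)$, which yields $\Exp_x^v[V_{\alpha-\epsilon}(X_t)]\le V_{\alpha-\epsilon}(x)+C_0(\alpha-\epsilon)t$ and hence $g(x,t)=\overline{C}_0(C_0(\alpha-\epsilon)t+V_{\alpha-\epsilon}(x))$. Your proposed inequality $\Ag^v W\ge -cW^{1-\theta}$ goes the other way, and the conclusion you draw from it — a \emph{growing} lower bound $\Exp_x^v[W(X_t)]\ge\bigl(W^\theta(x)+c\theta t\bigr)^{1/\theta}$ — is a sign error: Dynkin plus Jensen applied to $\Ag^v W\ge -cW^{1-\theta}$ gives a \emph{decaying} lower bound of the form $\bigl(W^\theta(x)-c\theta t\bigr)_+^{1/\theta}$, which is eventually vacuous and in any case is not what Hairer's theorem consumes.

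Second, and more importantly, the quantitative tail estimate $\overline\uppi_v\bigl(\{G\ge t\}\bigr)\ge f(t)$ is missing entirely. The fact $\overline\uppi_v(G)=\infty$ from \eqref{EL5.7b} is only a qualitative hypothesis in \cite[Theorem~5.1]{Hairer-16}; you still must exhibit a rate $f$, and that rate is what determines the final exponent $\tfrac{1-\alpha}{1-\epsilon}$. The paper's proof spends most of its length (equations \eqref{PT4B}--\eqref{PT4G}) doing exactly this: it integrates $\Ag^v F_{1,R}$ against $\overline\uppi_v$ to obtain the identity \eqref{PT4C}, subtracts the $R\to\infty$ limit to obtain \eqref{PT4E}, uses the convexity of $F-F_{1,t}$ to conclude $\fI_\alpha(F-F_{1,t})\ge 0$, and then lower-bounds $\overline\uppi_v\bigl(\fI_\alpha(F-F_{1,t})\bigr)\ge\fI_\alpha(F-F_{1,t})(0)\,\overline\uppi_v(\sB)$, with $\fI_\alpha(F-F_{1,t})(0)\ge\Hat\kappa t^{1-\alpha}$ by an explicit computation with the $\alpha$-stable kernel. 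That yields $f(t)\asymp t^{\frac{1-\alpha}{\alpha-\epsilon}}$, and only then does solving $yf(y)=2g(x,t)$ and applying Hairer's theorem produce the exponent $\tfrac{1-\alpha}{1-\epsilon}$. None of this is in your proposal. Relatedly, the hypothesis $\Gamma v=0$ is not used to remove a restoring drift term in a Lyapunov computation for the upper bound (that bound holds uniformly over all of $\Delta$ anyway); it is used so that $\langle b_v(x),\nabla F(x)\rangle=\Breve\chi'(\langle e,M^{-1}x\rangle)\bigl(-\beta+\langle e,x\rangle^-\bigr)$ has the clean form \eqref{PT4B} with no abandonment contribution, which is precisely what makes the integration-by-parts identity \eqref{PT4D} and the moment obstruction \eqref{EL5.7b} hold.
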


\begin{proof}
The proof uses \cite[Theorem~5.1]{Hairer-16} and some results from \cite{APS19}.
Recall the function $\psi$, and define
\begin{equation*}
\Breve\chi(t)\,\df\, 1+\psi(t)\,,\quad\text{and}\quad
\chi(t) \,\df\, - \Breve\chi(-t)\,.
\end{equation*}
Also, we scale $\chi(t)$ using $\chi_R(t)\df R+\chi(t-R)$, $R\in\RR$.
Thus, $\chi_R(t)= t$ for $t\le R-1$ and
$\chi_R(t) = R-\frac{1}{2}$ for $t\ge R$.

Let
\begin{equation*}
 F(x) \,\df\, \Breve\chi\bigl(\langle e,M^{-1}x\rangle\bigr)\,,\quad
\text{and\ \ } F_{\kappa,R}(x) \,\df\, \chi_R\circ F^\kappa(x)\,,\quad
x\in\Rm\,,\ R>0\,,
\end{equation*}
where $F^\kappa(x)$ denotes the $\kappa^{\mathrm{th}}$ power of $F(x)$,
with $\kappa>0$.

Using the same notation as in \cite[Theorem~5.1]{Hairer-16} whenever possible,
we define $G(x) \df F^{\alpha-\epsilon}(x)$, for $\epsilon\in(0,\alpha-1)$.
Then $\overline\uppi_v(F^{\alpha-\epsilon})=\infty$ by \eqref{EL5.7b}.
Applying the It\^o formula to \eqref{PT2A} we obtain
\begin{equation*}
\Exp_x^v\bigl[V_{\alpha-\epsilon}\bigl(X_t\bigr)\bigr]
- V_{\alpha-\epsilon}(x) \,\le\, C_0(\alpha-\epsilon)\, t\,,\qquad x\in\Rm\,.
\end{equation*}
Since $F^{\alpha-\epsilon} \le \overline{C}_0 V_{\alpha-\epsilon}$
for some constant $\overline{C}_0\ge1$,
the preceding inequality implies that
\begin{equation*}
\Exp_x^v\bigl[F^{\alpha-\epsilon}\bigl(X_t\bigr)\bigr]\,\le\,
\overline{C}_0\bigl(C_0(\alpha-\epsilon) t+ V_{\alpha-\epsilon}(x)\bigr)
\,=:\, g(x,t)\,.
\end{equation*}

Next, we compute a suitable lower bound $f(t)$ for
$\overline\uppi_v\bigl(\{x\colon G(x)\ge t\}\bigr)$.
We have
\begin{equation}\label{PT4B}
\begin{aligned}
\Ag^v F_{1,R}(x) &\,=\, \fI_\alpha F_{1,R}(x) +
\chi_R'\bigl(F(x)\bigr)
\bigl\langle b_v(x),\nabla F(x)\bigr\rangle\\
&\,=\, \fI_\alpha F_{1,R}(x) +
\chi_R'\bigl(F(x)\bigr) \Breve\chi'\bigl(\langle e,M^{-1}x\rangle\bigr)
\bigl(-\beta + \langle e,x\rangle^-\bigr)\,.
\end{aligned}
\end{equation}
Integrating \eqref{PT4B} with respect to $\overline\uppi_v$,
and replacing the variable $R$ with $t$, we obtain
\begin{equation}\label{PT4C}
\beta\,\overline\uppi_v\bigl(\chi_t'(F)h\bigr)\,=\,
\overline\uppi_v\bigl(\fI_\alpha F_{1,t}\bigr)
+ \overline\uppi_v\bigl(\chi_t'(F)\Tilde{h}\bigr)\,,
\end{equation}
where
\begin{equation*}
h(x) \,\df\,\Breve\chi'\bigl(\langle e,M^{-1}x\rangle\bigr)\,,
\quad\text{and}\quad
\Tilde{h}(x)\,\df\, h(x)\, \langle e,x\rangle^- \,.
\end{equation*}
Taking limits as $t\to\infty$ in \eqref{PT4C}, we obtain
\begin{equation}\label{PT4D}
\beta \overline\uppi_v(h) \,=\, \overline\uppi_v(\fI_\alpha F) + 
\overline\uppi_v(\Tilde h)\,.
\end{equation}
Subtracting \eqref{PT4C} from \eqref{PT4D}, gives
\begin{equation}\label{PT4E}
\beta\, \overline\uppi_v(h-\chi_t'(F)h)
\,=\, \overline\uppi_v\bigl(\fI_\alpha (F-F_{1,t})\bigr)
+ \overline\uppi_v\bigl(\Tilde{h}-\chi_t'(F)\Tilde{h}\bigr)\,.
\end{equation}
Note that all the terms in this equation are nonnegative.
Moreover, $\fI_\alpha (F-F_{1,t})(x)$ is nonnegative by convexity,
and thus
\begin{equation}\label{PT4F}
\begin{aligned}
\overline\uppi_v \bigl(\fI_\alpha (F-F_{1,t})\bigr)
&\,\ge\,\inf_{x\in\sB}\,\bigl(\fI_\alpha (F-F_{1,t})(x)\bigr)
\,\overline\uppi_v(\sB)\\
&\,\ge\,\fI_\alpha (F-F_{1,t})(0)\,\overline\uppi_v(\sB)\,.
\end{aligned}
\end{equation}
It is straightforward to show that
$\fI_\alpha (F-F_{1,t})(0)\ge\Hat\kappa t^{1-\alpha}$ for some
positive constant $\Hat\kappa$.
Therefore, by \eqref{PT4E}--\eqref{PT4F}
and the  definition of the functions $F$, $F_{1,R}$ and $h$, 
we obtain
\begin{equation}\label{PT4G}
\begin{aligned}
\overline\uppi_v\bigl(\{x\colon \langle e,M^{-1}x\rangle > t\}\bigr)
&\,\ge\,\overline\uppi_v(h-\chi_t'(F)h)\\
&\,\ge\,\beta^{-1}\,\overline\uppi_v(\sB)
\fI_\alpha (F-F_{1,t})(0)\\
&\,\ge\, \Hat\kappa\, t^{1-\alpha}\,.
\end{aligned}
\end{equation}
Therefore, by \eqref{PT4G}, we have
\begin{align*}
\overline\uppi_v\bigl(\{x\colon G(x)\ge t\}\bigr)&\,=\,
\overline\uppi_v\bigl(\{x\colon
\bigl(\langle e,M^{-1}x\rangle\bigr)^{\alpha-\epsilon} > t\}\bigr)
\nonumber\\
&\,=\, \overline\uppi_v\bigl(\{x\colon \langle e,M^{-1}x\rangle
> t^{\frac{1}{\alpha-\epsilon}}\}\bigr)  \nonumber\\
&\,\ge\, \Hat\kappa\, t^{\frac{1-\alpha}{\alpha-\epsilon}}\,=:\,f(t)\,.
\end{align*}
Next we solve $y f(y) = 2g(x,t)$ for $y=y(t)$, and this gives us
$y=\bigl(\Hat\kappa^{-1}2g(x,t)\bigr)^{\frac{\alpha-\epsilon}{1-\epsilon}}$, and
\begin{equation*}
f(y)\,=\, \Hat\kappa\bigl(\Hat\kappa^{-1}2g(x,t)\bigr)^{\frac{1-\alpha}{1-\epsilon}}
\,=\,
\overline{C}_1 \bigl(C_0(\alpha-\epsilon) t
+ V_{\alpha-\epsilon}(x)\bigr)^{\frac{1-\alpha}{1-\epsilon}}\,,
\end{equation*}
with
\begin{equation*}
\overline{C}_1\,\df\,  \bigl(2\overline{C}_0\bigr)^{\frac{1-\alpha}{1-\epsilon}}
\Hat\kappa^{\frac{\alpha-\epsilon}{1-\epsilon}}\,.
\end{equation*}
Therefore, by \cite[Theorem~5.1]{Hairer-16}, and since $\epsilon$ is arbitrary, we have
\begin{equation}\label{PT4H}
\begin{aligned}
\bnorm{P^v_t(x,\,\cdot\,) - \overline\uppi_v(\,\cdot\,)}_\tv
&\,\ge\, f(y) -\frac{g(x,t)}{y}\\
&\,=\,\frac{\overline{C}_1}{2} \bigl(C_0(\alpha-\epsilon) t
+ V_{\alpha-\epsilon}(x)\bigr)^{\frac{1-\alpha}{1-\epsilon}}
\end{aligned}
\end{equation}
for all $t\ge0$ and $\epsilon\in(0,\alpha-1)$.

As shown in the proof of \cite[Theorem~3.4]{APS19},
there exists a positive constant $\kappa_0'$, not depending on $\epsilon$, such that
\begin{equation}\label{PT4I}
C_0(\alpha-\epsilon)\,\ge\, \kappa_0'(1+\epsilon^{-1})\,.
\end{equation}
Thus the result follows by \eqref{PT4H}--\eqref{PT4I}.
\qed\end{proof}

\section{Ergodic properties of the limiting SDEs arising from queueing models
with service interruptions}\label{S4}

The limiting equations of
multiclass $G/M/n+M$ queues with
asymptotically negligible service interruptions under the $\sqrt{n}$-scaling
in the Halfin--Whitt regime are L\'evy--driven SDEs of the form
\begin{equation}\label{E-sde2}
\D X_t \,=\, b(X_t,U_t)\,\D{t} + \upsigma\D W_t + \D L_t,\qquad X_0=x\in\Rm\,.
\end{equation}
Here, the drift $b$ is as in Section~\ref{S2},
$\upsigma$ is a nonsingular diagonal matrix,
and $\process{L}$ is a compound Poisson process, with a drift $\vartheta$,
and a finite
L\'evy measure $\eta(\D{y})$ which is
supported on a half-line
of the form $\{tw\,\colon t\in[0,\infty)\}$,
with $\langle e,M^{-1} w\rangle>0$. 
This can be established as in Theorem~\ref{T6} in Section~\ref{S5}, assuming that
the control is of the form $U_t = v(X_t)$ for a map $v\colon\cK_+\to\Delta$,
such that $b_v(x)$ is locally Lipschitz,
when the scaling is of order $\sqrt{n}$
(see also Section~4.2 of \cite{APS19}).

As we explain later, under any stationary Markov control, 
the SDE in \eqref{E-sde2} has a unique strong
solution which is an open-set irreducible and aperiodic strong Feller process.
Therefore, as far as the study of the process
$\process{X}$ is concerned,  we do not need to impose a local Lipschitz
continuity condition on the drift, but can allow the control to be any element of $\Usm$.

There are two important parameters to consider.
The first is the parameter $\theta_c$, which is defined by
\begin{equation*}
\theta_c\,\df\,\sup\,\bigl\{\theta\in\Theta_c\bigl\}
\,,\quad\text{with}\quad
\Theta_c\,\df\,\Bigl\{\theta>0\, \colon
\int_{\sB^c}\abs{y}^{\theta}\eta(\D{y})\,<\,\infty\Bigl\}\,.
\end{equation*}
The second is the \emph{effective spare capacity},  defined as 
\begin{equation*}
\widetilde\beta\,\df\,  -\bigl\langle e,M^{-1}\Tilde\ell\,\bigr\rangle\,,
\end{equation*}
where
\begin{equation*}
\Tilde{\ell} \,\df\, \begin{cases}
\ell+\vartheta + \int_{\sB^c}y\,\eta(\D{y})\,,&\text{if\ \ }
\int_{\sB^c}\abs{y}\eta(\D{y})<\infty\\[5pt]
\ell+\vartheta,& \text{otherwise}.
\end{cases}
\end{equation*}

Suppose that $v\in\Usm$ is such that $\Gamma v(x)=0$ a.e. $x$ in $\Rm$.
Then as shown in Lemma~5.7 of \cite{APS19},  the process $\process{X}$ controlled
by $v$ cannot have an invariant probability measure $\overline\uppi_v$
unless $1\in\Theta_c$ and $\widetilde\beta>0$, and moreover,
\begin{equation*}
\int_\Rm \bigl( \langle e,M^{-1} x\rangle^+\bigr)^p \,\overline\uppi_v(\D{x})
\,<\,\infty \quad\text{for\ some\ } p>0 \quad\Longrightarrow\quad p+1\in\Theta_c\,.
\end{equation*}
In addition,
$\widetilde\beta=\int_\Rm \langle e,x\rangle^-\,\overline\uppi_v(\D{x})$
 \cite[Theorem~3.4\,(b)]{APS19}.
Conversely, $1\in\Theta_c$
and $\widetilde\beta>0$ are  sufficient  for $\process{X}$ to have
an invariant probability measure $\overline\uppi_v$
under any constant control $v$,
and $\overline\uppi_v\in\cP_p(\Rm)$ if $p+1\in\Theta_c$
(see Theorems~3.2 and 3.4\,(b) in \cite{APS19}).

On the other hand, if $\Gamma>0$, that is, it has positive diagonal elements,
then $\process{X}$ is geometrically ergodic under any constant Markov control,
and $\overline\uppi_v\in\cP_{\theta}(\Rm)$ for any $\theta\in\Theta_c$
\cite[Theorem~3.5]{APS19}.
This bound is tight since, in general, if under some Markov control $v$
the process
$\process{X}$ has an invariant probability
measure $\overline\uppi_v\in\cP_p(\Rm)$, then necessarily $p\in\Theta_c$.

We extend the results derived for constant Markov controls in \cite{APS19}
to all controls in $\Usm$.
Recall the definition in \eqref{frakd}.
Let
\begin{equation*}
\Tilde{b}(x,u) \,\df\, b(x,u) + \Tilde\ell - \ell\,,
\end{equation*}
and $\Tilde{b}_v(x)=\Tilde{b}\bigl(x,v(x)\bigr)$ for $v\in\Usm$. 
As explained in Section~\ref{S2}, we assume, without loss of generality,
that the constant term in $\Tilde{b}$ is as in \eqref{E-drift2} with
$\beta$ replaced by $\widetilde\beta$.

We define the operator $\mathcal{A}^u$ on $\Cc^2$ functions by
\begin{equation*}
\cA^u f(x) \,\df\, \Lg^u f(x) + \mathfrak{J}_{\eta}f(x)\,,\quad
(x,u)\in\Rm\times\Delta\,,
\end{equation*}
where
\begin{equation}\label{E-Lg}
\Lg^u f(x) \,=\, \frac{1}{2}\trace\bigl(\sigma\sigma'\nabla^2f(x)\bigr)
+ \bigl\langle\Tilde{b}(x,u),\nabla f(x)\bigr\rangle\,,\quad (x,u)\in\Rm\times\Delta\,,
\end{equation}
and
\begin{equation*}
\mathfrak{J}_{\eta}f(x)\,\df\,\int_\Rm \mathfrak{d} f(x;y)\,\eta(\D{y})\,,
\qquad x\in\Rm\,.
\end{equation*}
Also, $\Lg^v$ is defined as in \eqref{E-Lg} by replacing $u$ with
$v(x)$ for a control $v\in\Usm$, and analogously for $\cA^v$. 

It follows from the results in \cite{Gyongy-96} that, for any
$v\in\Usm$, the diffusion
\begin{equation}\label{E-sde-aux}
\D{\Tilde X_t} \,=\,
\Tilde{b}(\Tilde X_t,v(\Tilde X_t))\,\D{t} + \upsigma(\Tilde X_t)\,\D{W_t}\,,
\quad \Tilde X_0 = x\in \RR^d
\end{equation}
has a unique strong solution.
Also, as shown in \cite{Skorokhod-89}, since the
the L\'evy measure is finite,
the solution of \eqref{E-sde2} can be constructed in a piecewise fashion
using the solution of \eqref{E-sde-aux} (see also \cite{Li-05}).
It thus follows that, under any stationary Markov control, \eqref{E-sde2} has a unique
strong solution which is a strong Markov process.
In addition, its transition probability  $P^v_t(x,dy)$ satisfies 
$P^v_t(x,B)>0$ for all $t>0$, $x\in\Rm$ and $B\in\cB(\Rm)$
with positive Lebesgue measure. Thus, under any $v\in\Usm$,
the process $\process{X}$ is open--set irreducible and aperiodic.

Recall Definition~\ref{D1}.
In order to handle the second order derivatives in $\cA^u$
we need to scale the Lyapunov function $V_p$.
This is done as follows.
With $\psi$ as in Definition~\ref{D1}, we define
\begin{equation*}
\psi_\delta(t) \,\df\, \psi (\delta t)\,,\quad\text{and}\quad
\Psi_\delta(x) \,\df\, \sum_{i\in\cI} \frac{\psi_\delta(x_i)}{\mu_i}\,,
\quad \delta\in(0,1]\,,
\end{equation*}
and let
\begin{equation*}
\Lyap_{p,\delta}(x) \,\df\, \biggl(\delta^2\Psi(-x)+ \Psi_\delta(x)
+ \frac{m}{\min_{i\in\cI}\mu_i}\biggr)^p\,.
\end{equation*}
Note that $\Lyap_{1,\delta}$ is bounded away from $0$ uniformly
in $\delta\in(0,1]$.
Here we use the inequality
$\sum_{i\in\cI} \psi_\delta'(x_i) x_i \ge \delta\norm{x^+}^{}_1 - \frac{m}{2}$.
Then, under the assumption that $\widetilde\beta>0$, the drift inequalities
take the form
\begin{align}\label{E-dr}
&\bigl\langle \Tilde{b}(x,u), \nabla \Lyap_{p,\delta} (x)\bigr\rangle \nonumber\\[5pt]
&\mspace{50mu}\,\le\,
\begin{cases}
p\delta\Bigl(\delta\widetilde\beta +
\frac{m}{2\delta}(1+\delta^2) - \delta \norm{x}^{}_1\Bigr) \Lyap_{p-1,\delta}(x)
&\forall\,x\in\cK_-\,,\\[5pt]
- p\delta\bigl(\frac{\widetilde\beta}{m}-\delta\widetilde\beta-\delta\frac{m}{2}
+\delta\norm{x^-}{}_1\bigr)\,\Lyap_{p-1,\delta}(x) 
&\forall\,(x,u)\in\cK_+\times\Delta\,.
\end{cases}
\end{align}

The following result is analogous to Theorem~\ref{T2}.

\begin{theorem}\label{T5}
Assume $\widetilde\beta>0$, and $1\in\Theta_c$.
Let $p\in\Theta_c$ with $p>1$.
Then the following hold.
\begin{enumerate}
\item[\upshape{(}a\upshape{)}]
There exists $\delta>0$, a positive constant
$\widetilde{C}_0$, and a compact set $K$ such that
\begin{equation}\label{ET5A}
\cA^u \Lyap_{p,\delta}(x) \,\le\,
\widetilde{C}_0\Ind_K(x) - p\delta\frac{\widetilde\beta}{2m}
\Lyap_{p-1,\delta}(x)\quad\forall\,(x,u)\in\RR^m\times\Delta\,.
\end{equation}

\item[\upshape{(}b\upshape{)}]
Under any $v\in\Usm$, the process
$\process{X}$ in \eqref{E-sde} admits a unique invariant probability measure
$\overline\uppi_v\in\cP(\Rm)$.

\item[\upshape{(}c\upshape{)}]
For any $\theta\in\Theta_c$ there exists a constant
 $\widetilde{C}_1(\theta)$ depending only
on $\theta$, such that,
under any $v\in\Usm$, the process
$\process{X}$ in \eqref{E-sde} satisfies 
\begin{equation*}
\bnorm{P^v_t(x,\,\cdot\,)-\overline\uppi_v(\,\cdot\,)}_\tv\,\le\,
\widetilde{C}_1(\theta) (t\vee1)^{1-\theta}\abs{x}^{\theta}
\quad\forall\,x\in\Rm\,.
\end{equation*}
\end{enumerate}
\end{theorem}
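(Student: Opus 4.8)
The proof follows the scheme of the proof of Theorem~\ref{T2}. The crucial structural point is that the drift inequalities \eqref{E-dr} are \emph{already} uniform over $u\in\Delta$, and that the two remaining pieces of $\cA^u$ — the diffusion part $\tfrac12\trace(\sigma\sigma'\nabla^2\,\cdot\,)$ and the non-local part $\mathfrak{J}_\eta$ — do not involve $u$ at all. Hence part~(a) reduces to estimating $\trace(\sigma\sigma'\nabla^2\Lyap_{p,\delta})$ and $\mathfrak{J}_\eta\Lyap_{p,\delta}$ and checking that, after choosing $\delta$ small, these are absorbed by the negative term in \eqref{E-dr}; parts~(b) and~(c) then follow from~(a) by the same soft arguments as in Theorem~\ref{T2}. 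Write $W\df\Lyap_{1,\delta}$, so $\Lyap_{p,\delta}=W^p$. The point of the $\delta$-tilting is that $\psi_\delta'=\delta\,\psi'(\delta\,\cdot)$ and the $\Psi(-x)$ block carries a factor $\delta^2$, so that $\abs{\nabla W}=\order(\delta)$ and $\norm{\nabla^2 W}=\order(\delta^2)$ uniformly on $\Rm$; since $W$ is convex and bounded below uniformly in $\delta$, the identity $\nabla^2 W^p=p\,W^{p-1}\nabla^2 W+p(p-1)W^{p-2}\,\nabla W(\nabla W)'\succeq0$ gives $\trace(\sigma\sigma'\nabla^2\Lyap_{p,\delta})(x)\le C_p\,\delta^2\,\Lyap_{p-1,\delta}(x)$.

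The main obstacle is $\mathfrak{J}_\eta\Lyap_{p,\delta}(x)=\int_\Rm\mathfrak{d}\Lyap_{p,\delta}(x;y)\,\eta(\D y)$; here both $1\in\Theta_c$ and $p\in\Theta_c$ enter (the latter being exactly what makes this integral finite, $\Lyap_{p,\delta}$ having growth of order $p$). Split $\int_\Rm=\int_{\{\abs y\le1/\delta\}}+\int_{\{\abs y>1/\delta\}}$. On $\{\abs y\le1/\delta\}$ use the second-order estimate $0\le\mathfrak{d}\Lyap_{p,\delta}(x;y)\le\tfrac12\abs y^2\sup_{z\in[x,x+y]}\norm{\nabla^2\Lyap_{p,\delta}(z)}\le C_p\,\delta^2\bigl(\Lyap_{p-1,\delta}(x)+(\delta\abs y)^{p-1}\bigr)\abs y^2$, together with the bounds $\int_{\{\abs y\le1/\delta\}}\abs y^2\eta(\D y)\le\eta(\Rm)+\delta^{p-2}\!\int_{\{\abs y\ge1\}}\!\abs y^p\eta(\D y)$ and $\int_{\{\abs y\le1/\delta\}}\abs y^{p+1}\eta(\D y)\le\eta(\Rm)+\delta^{-1}\!\int_{\{\abs y\ge1\}}\!\abs y^p\eta(\D y)$ (finite since $p\in\Theta_c$); this portion is thus $\le C_p'\,\delta^{\min(p,2)}\bigl(1+\Lyap_{p-1,\delta}(x)\bigr)$. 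On $\{\abs y>1/\delta\}$ decompose, with $a\df W(x)$ and $h\df W(x+y)-W(x)$, $\mathfrak{d}\Lyap_{p,\delta}(x;y)=\bigl[(a+h)^p-a^p-p\,a^{p-1}h\bigr]+p\,a^{p-1}\mathfrak{d}W(x;y)$; the bracket is nonnegative and, since $\abs h\le\order(\delta\abs y)$, is $\le C_p\abs h^p=\order\bigl((\delta\abs y)^p\bigr)$ when $p\in(1,2)$ and $\le C_p(a\vee\abs h)^{p-2}\abs h^2=\order\bigl((\delta\abs y)^2(W(x)\vee\delta\abs y)^{p-2}\bigr)$ when $p\ge2$, while $\mathfrak{d}W(x;y)\le\order(\delta\abs y)$ directly; using $\int_{\{\abs y>1/\delta\}}\abs y\,\eta(\D y)\le\delta^{p-1}\!\int_\Rm\abs y^p\eta(\D y)$ and (when $p\ge2$) $\int_{\{\abs y>1/\delta\}}\abs y^2\eta(\D y)<\infty$, this portion is $\le C_p''\bigl(1+\delta^{\min(p,2)}\Lyap_{p-1,\delta}(x)\bigr)$. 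Altogether $\mathfrak{J}_\eta\Lyap_{p,\delta}(x)\le\widetilde C\bigl(1+\delta^{\min(p,2)}\Lyap_{p-1,\delta}(x)\bigr)$ with $\widetilde C$ independent of $\delta$. Because $\min(p,2)>1$, one now fixes $\delta\in(0,1]$ small enough that \eqref{E-dr} holds, that the sum of the diffusion and jump contributions is at most $\tfrac{p\delta\widetilde\beta}{4m}\Lyap_{p-1,\delta}(x)+\mathrm{const}$, and that $\delta\widetilde\beta+\delta m/2\le\widetilde\beta/(4m)$; combining with \eqref{E-dr} gives $\cA^u\Lyap_{p,\delta}(x)\le\mathrm{const}-p\delta\tfrac{\widetilde\beta}{2m}\Lyap_{p-1,\delta}(x)$ for all $x\in\cK_+$ and $u\in\Delta$, while on $\cK_-$ (where $\Tilde b(x,u)$ does not depend on $u$) the term $\tfrac{m}{2\delta}(1+\delta^2)$ appearing in \eqref{E-dr} is dominated by the $-\delta\norm x_1$ term once $\norm x_1$ exceeds a $\delta$-dependent constant. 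Taking $K$ to be the resulting compact set proves \eqref{ET5A}.

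For part~(b), recall that under any $v\in\Usm$ the SDE \eqref{E-sde2} has a unique strong solution which is a strong Markov, strong Feller, open-set irreducible and aperiodic process (established just before the theorem via \cite{Gyongy-96,Skorokhod-89,Li-05}). Specializing \eqref{ET5A} to $u=v(x)$, and localizing by Dynkin's formula along a sequence of exit times, the Feller property together with this Foster--Lyapunov inequality yields existence of $\overline\uppi_v$, and open-set irreducibility yields uniqueness (cf.\ \cite{Meyn-Tweedie}), exactly as in Theorem~\ref{T2}(b). For part~(c), the drift condition of~(a) has the form $\cA^v V\le-c\,V^{1-1/p}+b\Ind_K$ with $V=\Lyap_{p,\delta}$ and $\Lyap_{p,\delta}(x)\le C(1+\abs x)^p$, so \cite[Theorem~3.2]{Douc-2009} applies and gives subgeometric convergence at the polynomial rate $(t\vee1)^{-(p-1)}$ with prefactor $\Lyap_{p,\delta}(x)$, precisely as in the proof of Theorem~\ref{T2}(c); choosing $p=\theta$ for $\theta\in\Theta_c$ with $\theta>1$ produces $\bnorm{P^v_t(x,\cdot)-\overline\uppi_v(\cdot)}_\tv\le\widetilde C_1(\theta)(t\vee1)^{1-\theta}\abs x^\theta$, the remaining $\theta\in\Theta_c$ with $\theta\le1$ giving (for $t\ge1$) a fortiori weaker bounds.

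The hard part is the jump estimate of the second paragraph: the ``$\order(\delta)$-Lipschitz'' bound $\mathfrak{d}W(x;y)\le\order(\delta\abs y)$ is of the \emph{same} order in $\delta$ as the stabilizing drift, so merely splitting the L\'evy integral at $\abs y=1$ does not suffice; one must split at $\abs y=1/\delta$ and exploit that $\int_{\{\abs y>1/\delta\}}\abs y\,\eta(\D y)=\order(\delta^{p-1})$ — which is where $p\in\Theta_c$ is used — thereby reducing the jump contribution to $o(\delta)\,\Lyap_{p-1,\delta}(x)+\mathrm{const}$. Everything else either repeats the constant-control computations of \cite{APS19} or the soft ergodic-theoretic arguments already employed for Theorem~\ref{T2}.
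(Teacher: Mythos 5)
Your proposal is correct and follows the same overall scheme as the paper: the drift inequalities \eqref{E-dr} are already uniform in $u$, the diffusion contribution is $\order(\delta^2)\Lyap_{p-1,\delta}$, the jump contribution is controlled separately, and parts~(b)--(c) follow from~(a) by the soft arguments of Theorem~\ref{T2}. The one place where you deviate is the estimate of $\mathfrak{J}_\eta\Lyap_{p,\delta}$: the paper simply invokes \cite[Lemma~5.1]{APS19}, which gives the stronger statement that $\mathfrak{J}_\eta\Lyap_{p,\delta}$ vanishes at infinity for $p<2$ and is of order $\abs{x}^{p-2}$ for $p\ge2$ (hence $o(\Lyap_{p-1,\delta})$, absorbed by the drift outside a compact set regardless of $\delta$), whereas you derive by hand, via the decomposition $\mathfrak{d}\Lyap_{p,\delta}(x;y)=\bigl[(a+h)^p-a^p-pa^{p-1}h\bigr]+pa^{p-1}\mathfrak{d}W(x;y)$ and the split at $\abs{y}=\nicefrac{1}{\delta}$, the slightly weaker but fully sufficient bound $\mathfrak{J}_\eta\Lyap_{p,\delta}\le\widetilde C\bigl(1+\delta^{\min(p,2)}\Lyap_{p-1,\delta}\bigr)$ and then use $\min(p,2)>1$. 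This has the pedagogical merit of making explicit where $p\in\Theta_c$ enters (finiteness of $\int\abs{y}^p\eta$, tail integral $\int_{\{\abs y>1/\delta\}}\abs y\,\eta(\D y)\le\delta^{p-1}\int\abs y^p\eta$) and why the naive split at $\abs y=1$ is insufficient, at the cost of needing $\delta$ small rather than just any fixed $\delta$. Two small slips that do not affect the conclusion: the intermediate inequality $\int_{\{\abs y\le 1/\delta\}}\abs y^2\eta\le\eta(\Rm)+\delta^{p-2}\int_{\{\abs y\ge1\}}\abs y^p\eta$ is only valid for $p\le2$ (for $p\ge2$ you should simply use $\int_{\sB^c}\abs y^2\eta\le\int_{\sB^c}\abs y^p\eta<\infty$, which still yields the $\delta^2=\delta^{\min(p,2)}$ order after multiplying by the prefactor); and the ``a fortiori'' dispatch of $\theta\in\Theta_c$ with $\theta\le1$ in part~(c) is loose near $x=0$, but this is already an issue of the statement as written in the paper and not of your argument.
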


\begin{proof}
It is straightforward to show that $\psi''_\delta(t)\le 2\delta^2$
and $\psi'_\delta(t) \le \delta$ for all $t\in\RR$.
An easy calculation then shows that
there exists a positive constant $C$ such that
\begin{equation}\label{PT5A}
\trace\bigl(\sigma\sigma'\nabla^2 \Lyap_{p,\delta}(x)\bigr)
\,\le\,Cp^2\delta^2\bigl( \Lyap_{p-1,\delta}(x)+\Lyap_{p-2,\delta}(x)\bigr)
\end{equation}
for all $p\ge1$ and $x\in\Rm$.
Recall that $\Lyap_{1,\delta}$ is bounded away from $0$ uniformly
in $\delta\in(0,1]$.
This of course implies that $\Lyap_{p-2,\delta}$ is bounded by
some fixed multiple of $\Lyap_{p-1,\delta}$ for all $p\ge1$.
Therefore, \eqref{E-dr} and \eqref{PT5A} imply that for some
small enough positive $\delta$ we can chose
a positive constant
$\widetilde{C}_0'$, and a compact set $K'$ such that
\begin{equation}\label{PT5B}
\Lg^u \Lyap_{p,\delta}(x) \,\le\,
\widetilde{C}_0'\Ind_{K'}(x) - p\delta\frac{3\widetilde\beta}{4m}
\Lyap_{p-1,\delta}(x)\quad\forall\,(x,u)\in\RR^m\times\Delta\,.
\end{equation}

If $p\in\Theta_c$, then \cite[Lemma~5.1]{APS19} asserts that
$\mathfrak{J}_{\eta}\Lyap_{p,\delta}$ vanishes at infinity for
$p<2$, and
$\mathfrak{J}_{\eta}\Lyap_{p,\delta}$ is of order
$\abs{x}^{p-2}$ for $p\ge2$.
This together with \eqref{PT5B} implies \eqref{ET5A}.
The rest are as in the proof of Theorem~\ref{T2}.
\qed\end{proof}

If $\Gamma>0$, then the arguments in the proof of Theorem~\ref{T5}
together with Lemma~\ref{L2} show that the process $\process{X}$ is
geometrically ergodic uniformly over $v\in\Usm$.
Thus we obtain the analogous results to Theorem~\ref{T3}.
We omit the details which are routine.

Note that the assumption that the L\'evy measure $\eta(\D{y})$ is
supported on a half-line
of the form $\{tw\,\colon t\in[0,\infty)\}$,
with $\langle e,M^{-1} w\rangle>0$ has not been used,
and is not needed in Theorem~\ref{T5}.
Under this assumption we can obtain a lower bound of the rate
of convergence analogous to equation (3.9) in \cite{APS19},
by mimicking the arguments in that paper.
We leave the details to the reader.

\begin{remark}
With heavy-tailed arrivals and 
asymptotically negligible service interruptions under the common
$n^{\nicefrac{1}{\alpha}}$-scaling for $\alpha \in (1,2)$,
in the modified Halfin--Whitt regime, the limit process is an SDE driven by
an anisotropic $\alpha$-stable process (with independent $\alpha$-stable components)
as in \eqref{E-sde}, and a compound Poisson process with a finite
L{\'e}vy measure as in \eqref{E-sde2}.
This can be established as in Theorem~\ref{T6}, under the same scaling
assumptions in Section~4.2 of \cite{APS19}.
Thus the generator 
is given by
\begin{equation*}
\Hat\Ag^u f(x) \,\df\,
\bigl\langle\Tilde{b}(x,u),\nabla f(x)\bigr\rangle
+ \mathfrak{J}_{\eta}f(x) + \fI_\alpha f(x)\,,
\end{equation*}
and $\Hat\Ag^v$ is defined analogously by replacing $u$ with $v(x)$ 
for $v\in\tUsm$.

To study this equation,
 we use the Lyapunov function $V_p$ in Definition~\ref{D1},
with $p\in[1,\alpha)\cap\Theta_c$.
Following the proof of Theorem~\ref{T5}, and also using Lemma~\ref{L3},
it follows that there exists $\delta>0$ sufficiently small,
a constant $\widehat{C}_0$ and a compact set $\widehat{K}$
such that
\begin{equation*}
\Hat\Ag^u V_p(x) \,\le\,
\widehat{C}_0\Ind_{\widehat{K}}(x) - p\frac{\widetilde\beta}{2m}
V_{p-1}(x)\quad\forall\,(x,u)\in\RR^m\times\Delta\,.
\end{equation*}
Thus, \eqref{ET2C} holds for any $\epsilon$ such that
$\alpha-\epsilon\in\Theta_c$.
The results of Theorem~\ref{T3} also follow provided we select
$p\in[1,\alpha)\cap\Theta_c$.
However the lower bound is not necessarily the one in Theorem~\ref{T4}.
Instead we can obtain a lower bound in the form of equation (3.9) in \cite{APS19}.
\end{remark}

\section{Multiclass $G/M/n+M$ queues with heavy-tailed arrivals}\label{S5}

As in \cite[Subsection~4.1]{APS19}, consider $G/M/n+M$ queues with $m$ classes of
customers and one server pool of $n$ parallel servers.
Customers of each class form their own queue and are served in the first-come
first-served (FCFS) service discipline.
Customers of different classes are scheduled to receive service under the work
conserving constraint, that is, non-idling whenever customers are in queue. 
We assume that the arrival process of each class is  renewal with heavy-tailed
interarrival times. The service and patience times are exponentially distributed
with class-dependent rates. The arrival, service and abandonment processes of each
class are mutually independent. 

We consider a sequence of such queueing models 
indexed by $n$ and let $n\to\infty$. 
Let $A^n_i$, $i=1,\dotsc,m$, be the arrival process of class-$i$ customers
with arrival rate $\lambda^n_i$. Assume that $A^n_i$'s are mutually independent.
Define the FCLT-scaled arrival processes
$\Hat{A}^n= (\Hat{A}^n_1,\dotsc,\Hat{A}^n_m)'$ by
$\Hat{A}^n_i\df n^{-\nicefrac{1}{\alpha}} (A^n_i- \lambda^n_i \varpi)$,
$i=1,\dotsc,m$,
where $\varpi(t)\equiv t$ for each $t\ge 0$, and $\alpha \in (1, 2)$. 
We assume that 
\begin{equation} \label{E4.1}
\nicefrac{\lambda^n_i}{n} \,\to\, \lambda_i\,>\,0, \quad \text{and}\quad
\ell^n_i\,\df\, n^{-\nicefrac{1}{\alpha}}(\lambda^n_i - n \lambda_i)
\,\to\, \ell_i\in \RR\,,
\end{equation}
for each $i=1,\dotsc,m$, as $n\to\infty$, and
that the arrival processes satisfy an FCLT
\begin{equation*}
\Hat{A}^n \,\Rightarrow\, \Hat{A} = (\Hat{A}_1,\dotsc, \Hat{A}_m)'
\qquad\text{in\ } (D_m,\, M_1), \ \text{as\ } n\to\infty\,,
\end{equation*}
where the limit processes $\Hat{A}_i$, $i=1,\dotsc,m$, are mutually
independent symmetric $\alpha$-stable processes with $\Hat{A}_i(0)\equiv 0$,
and $\Rightarrow$ denotes
weak convergence and $(D_m,M_1)$ is the space of $\RR^m$-valued c{\`a}dl{\`a}g
functions endowed with the product $M_1$ topology \cite{WW02}.  
The processes $\Hat{A}_i$ have the same stability parameter $\alpha$,
with possibly different ``scale'' parameters $\xi_i$. 
Note that if the arrival process of each class is renewal with regularly
varying interarrival times of parameter $\alpha$, then we obtain the
above limit process.
Let $\mu_i$ and $\gamma_i$ be the service and abandonment rates for class-$i$
customers, respectively. 

\noindent \emph{The modified Halfin-Whitt regime}.
The parameters satisfy
\begin{equation*}
n^{1-\nicefrac{1}{\alpha}} (1-\rho^n)\,\xrightarrow[n\to\infty]{}\, \rho
\,=\, - \sum_{i=1}^m \frac{\ell_i}{\mu_i}\,,
\end{equation*}
where $\rho^n \df \sum_{i=1}^m \frac{\lambda^n_i}{n \mu_i}$
is the aggregate traffic intensity. This follows from \eqref{E4.1}. 
Let $\rho_i\df \nicefrac{\lambda_i}{\mu_i}$ for $i\in \cI$.

Let $X^n=(X^n_1,\dotsc,X^n_d)'$, $Q^n= (Q^n_1,\dotsc,Q^n_d)'$, and
$Z^n= (Z^n_1,\dotsc,Z^n_d)'$ be the processes counting the number of customers
of each class in the system, in queue, and in service, respectively. 
We consider work-conserving scheduling policies that are non-anticipative and
allow preemption (namely, service of a customer can be interrupted at any time
to serve some other class of customers and will be resumed at a later time). 
Scheduling policies determine the allocation of service capacity, i.e.,
the $Z^n$ process, which must satisfy the condition that
$\langle e, Z^n\rangle = \langle e,X^n\rangle \wedge n$ at each time,
as well as the balance equations $X^n_i = Q^n_i + Z^n_i$ for each $i$.

Define the FCLT-scaled processes $\Hat{X}^n=(\Hat{X}^n_1,\dotsc,\Hat{X}^n_d)'$,
$\Hat{Q}^n= (\Hat{Q}^n_1,\dotsc,\Hat{Q}^n_d)'$, and
$\Hat{Z}^n= (\Hat{Z}^n_1,\dotsc,\Hat{Z}^n_d)'$ by
\begin{equation*}
\Hat{X}_i^n \,\df\, n^{-\nicefrac{1}{\alpha}} (X^n_i - \rho_i n)\,,
\qquad \Hat{Q}_i^n \,\df\, n^{-\nicefrac{1}{\alpha}} Q^n_i\,, \qquad \Hat{Z}_i^n
\,\df\, n^{-\nicefrac{1}{\alpha}} (Z_i^n - \rho_i n) \,.
\end{equation*}

We need the following extension of
Theorem~1.1 in \cite{PW10}.
Let $\phi\colon D([0,T],\RR^m)\to D([0,T],\RR^m)$ denote
the mapping $x\mapsto y$ defined by the integral representation
\begin{equation*}
y(t) \,=\, x(t) + \int_0^t h(y(s))\, \D s\,, \quad  t\ge 0\,.
\end{equation*}
It is shown in \cite[Theorem~1.1]{PW10} that the mapping $\phi$ is continuous
in the Skorohod $M_1$ topology when $m=1$ and the function $h$ is Lipschitz continuous. 
The lemma which follows extends this result to functions
$h\colon \RR^m \to \RR^m$ which are locally Lipschitz continuous and have
at most linear growth.

\begin{lemma} \label{L4}
Assume that $h$ is locally Lipschitz and has at most linear growth.
Then the mapping $\phi$ defined above is continuous in $(D_m, M_1)$, 
the space $D([0,T],\RR^m)$ endowed with the product $M_1$ topology. 
\end{lemma}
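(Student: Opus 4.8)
The plan is to reduce the multidimensional, locally Lipschitz case to the scalar, globally Lipschitz case already handled in \cite[Theorem~1.1]{PW10}, by a combination of (i) a componentwise reduction together with a product-topology argument, or alternatively a direct Gronwall estimate, and (ii) a localization/truncation argument that removes the difference between local and global Lipschitz continuity on the relevant compact time window. First I would fix $T>0$ and a convergent sequence $x_n \to x$ in $(D_m, M_1)$, and let $y_n = \phi(x_n)$, $y = \phi(x)$. The first task is an a priori bound: since $h$ has at most linear growth, $\abs{h(z)} \le c_0(1+\abs{z})$, the integral equation $y_n(t) = x_n(t) + \int_0^t h(y_n(s))\,\D s$ together with Gronwall's inequality gives $\sup_{[0,T]} \abs{y_n(t)} \le (\sup_{[0,T]}\abs{x_n(t)} + c_0 T)\E^{c_0 T}$. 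Because $x_n \to x$ in $M_1$ on $[0,T]$ implies $\sup_n \sup_{[0,T]}\abs{x_n}<\infty$ (convergence in $M_1$ forces uniform boundedness of the graphs, hence of the sup-norms), we conclude that all the paths $y_n$, and $y$, take values in a fixed compact ball $\sB_R \subset \Rm$, where $R$ depends only on $T$, $c_0$, and $\sup_n\sup_{[0,T]}\abs{x_n}$.

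The second task is to replace $h$ by a globally Lipschitz modification. Let $h_R\colon\Rm\to\Rm$ be a globally Lipschitz function that agrees with $h$ on $\sB_R$; such an $h_R$ exists because $h$ is Lipschitz on the compact set $\overline{\sB}_R$ and Lipschitz functions on closed sets extend to globally Lipschitz functions on $\Rm$ (e.g. by the standard McShane/Kirszbraun-type extension, applied componentwise with a uniform constant). Since $y_n$ and $y$ stay in $\sB_R$, they also solve the integral equation driven by $h_R$; that is, $\phi$ and the corresponding map $\phi_R$ built from $h_R$ coincide on the sequence $\{x_n\}$ and on $x$. Hence it suffices to prove continuity of $\phi_R$ at $x$, and from now on we may assume $h$ is globally Lipschitz with constant $L$.

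The third task is the actual continuity proof for globally Lipschitz $h$ in the product $M_1$ topology. Here the cleanest route is the componentwise one: for each coordinate $j$, the scalar path $y_n^{(j)}$ satisfies $y_n^{(j)}(t) = x_n^{(j)}(t) + \int_0^t h^{(j)}(y_n(s))\,\D s$, which is \emph{not} of the scalar form treated in \cite{PW10} because $h^{(j)}$ depends on all coordinates. To handle this I would instead argue directly at the level of the fixed-point map, adapting the $M_1$-continuity proof of \cite{PW10}: write the $M_1$ distance via parametric representations $(u_n, r_n)$ of the completed graphs of $x_n$ converging uniformly to a parametric representation $(u, r)$ of the graph of $x$, and show that the induced parametric representations of $y_n$ (obtained by composing the integral map with these parametrizations, using the same time-change $r_n$) converge uniformly to that of $y$. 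The Lipschitz bound on $h$ gives, along the parametrization, an inequality of the form $\abs{\tilde y_n(s) - \tilde y(s)} \le \abs{\tilde x_n(s) - \tilde x(s)} + L\int_0^{s} \abs{\tilde y_n(\tau)-\tilde y(\tau)}\,\abs{r_n'(\tau)}\,\D\tau$ (interpreted via the time change $r_n$), and Gronwall closes the estimate, yielding uniform convergence of the parametric representations, i.e. $y_n \to y$ in $M_1$. One must check that monotone (nondecreasing) time and space parametrizations are preserved under this map — this is where the one-dimensional monotonicity structure that makes $M_1$ work is used — and that passing to the product topology is automatic since the same time change $r_n$ is used in every coordinate.

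The main obstacle I expect is precisely this last point: $M_1$ convergence is genuinely delicate because the topology is built from monotone parametric representations of graphs, and one must verify that the integral (Volterra) map $x \mapsto y$ respects that monotone structure coordinatewise and that the common time change can be carried through all components simultaneously — this is exactly the subtlety that \cite[Theorem~1.1]{PW10} handles in one dimension, and the work here is to check it survives the vector-valued integrand $h(y(s))$ rather than a scalar one. The linear-growth/localization steps are routine Gronwall arguments; the genuinely new content is making the $M_1$ parametrization argument go through with a coupled, merely locally Lipschitz drift, which the two reductions above are designed to reduce to the globally Lipschitz scalar-parametrization estimate.
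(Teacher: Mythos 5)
Your proposal follows the same essential route as the paper: an a priori Gronwall bound from the linear growth of $h$ confines all $y_n$ and $y$ to a fixed ball $\sB_R$, and a second Gronwall estimate at the level of the $M_1$ parametric representations $(u_{y_n},r_n)$, $(u_y,r)$ (with a common time component across coordinates, constructed as in \cite{PW10}) closes the argument. The one place you diverge is inessential: you insert a McShane-type extension to replace $h$ by a globally Lipschitz $h_R$ before the second Gronwall step, whereas the paper skips this and simply uses the local Lipschitz constant $\kappa_R$ of $h$ on $\sB_R$ directly inside the Gronwall inequality, giving the bound $\norm{u_{y_n}-u_y}_\infty\le\bigl(\norm{u_n-u}_\infty+\norm{r_n'-r'}_{L^1}\sup_{\sB_R}h\bigr)\E^{\kappa_R\norm{r_n'}_\infty}$; once the paths are known to stay in $\sB_R$ the truncation buys nothing. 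You also correctly flag, without fully resolving, the one genuinely multidimensional point — that the time component $r_n$ must be built to absorb the discontinuity times of \emph{all} coordinates of $x_n$ simultaneously so the same parametrization serves every component — which the paper handles by modifying the construction in Lemma~4.3 of \cite{PW10} to include all such jump times.
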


\begin{proof}
Assume that $x_n \to x$ in $D_m$ with the product $M_1$ topology as $n\to\infty$.
Let $x^i$ be the $i^{\rm th}$ component of $x$, and similarly for $x^i_n$. 
Let 
\begin{equation*}
G_x \,\df\, \bigl\{(z,t) \in \RR^m\times [0,T]\colon z^i \in [x^i(t-), x^i(t)] \quad
\text{for each\ } i =1,\dotsc,m \bigr\}\,,
\end{equation*}
be the (weak) graph of $x$, and similarly, $G_{x_n}$ for $x_n$;
see Chapter~12.3.1 in \cite{WW02}. 
Then following the proof of Theorem~1.2 in \cite{PW10}, it can be shown that 
there exist parametric representations $(u,r)$ and $(u_n, r_n)$ of $x$ and $x_n$,
that map $[0,1]$ onto the graphs $G_x$ and $G_{x_n}$ of $x$ and $x_n$, respectively,
and satisfy the properties below.
In the construction of the time component as in Lemma 4.3 of \cite{PW10}, 
the discontinuity points of all the $x^i$ components need to be included,
and then the spatial component
can be done similarly as in the proof of that lemma. 
\begin{itemize}
\item
The time (domain) components $r, r_n \in C([0,1], [0,T])$ are nondecreasing
functions satisfying
$r(0) =r_n(0)=0$ and $r_n(1) =r_n(1)=T$, and such that
$r$ and $r_n$ are absolutely continuous with respect to Lebesgue measure on $[0,1]$.
\item
The derivatives $r'$ and $r'_n$ exist for all $n$ and satisfy
 $\norm{r'}_\infty<\infty$, $\sup_n \norm{r'_n}_\infty < \infty$,
and $\norm{r'_n-r'}_{L^1} \to 0$,
where $\norm{r}_\infty \df \sup_{s\in [0,1]}\abs{r(s)}$,
and $\norm{\,\cdot\,}_{L^1}$ denotes the $L^1$ norm.
\item
The spatial components $u= (u^1,\dotsc,u^m)$
and $u_n = (u_n^1,\dotsc,u_n^m)$, $n\in\NN$, lie in $C([0,1],\Rm)$,
and satisfy $u(0) = x(0)$, $u(1) = x(T)$,
$u_n(0) = x_n(0)$, $u_n(1) = x_n(T)$, and
$ \norm{u_n-u}_\infty \to 0$ as $n\to\infty$.
\end{itemize}

As shown in the proof of Theorem~1.1 in \cite{PW10}, there exist  parametric
representations $(u_y,r_y)$ and $(u_{y_n}, r_{y_n})$ of $y$ and $y_n$,
respectively, with $r_y = r$ and $r_{y_n} =r_n$, satisfying
\begin{equation} \label{uy-rep}
u_{y_n}(s) \,=\, u_n(s) + \int_0^s h\bigl(u_{y_n}(w)\bigr) r'_n(w)\,\D w\,,
\quad  s \in [0,1]\,,
\end{equation}
and similarly for $u_{y}(s)$.
Here, $(u,r)$ and $(u_n,r_n)$ are the parametric representations of
$x$ and $x_n$, respectively, whose properties are summarized above.

Since $x_n \to x$ in $(D_m, M_1)$ as $n\to\infty$,
we have $\sup_{n}\, \norm{u_{n}}_\infty< \infty$.
Taking norms in \eqref{uy-rep}, and using also the property
$\sup_n\, \norm{r'_n}_\infty < \infty$, and the
linear growth of $h$, an application of Gronwall's lemma shows that
$\sup_n\,\norm{u_{y_n}}_\infty \le  R $
for some constant $R$.
Enlarging this constant if necessary, we may also assume that
$\norm{u_{y}}_\infty \le  R$.
By the representation in \eqref{uy-rep}, we have 
\begin{align*}
\abs{u_{y_n}(s) - u_y(s)} & \,\le\, \abs{u_n(s) - u(s)} 
+ \babss{\int_0^s \bigl(h(u_{y_n}(w)) - h(u_{y}(w)) \bigr) r'_n(w)\,\D w } 
\nonumber \\
& \mspace{80mu}  +
\babss{\int_0^s h(u_{y}(w))  r'_n(w)\,\D w - \int_0^s h(u_{y}(w))  r'(w)\,\D w}\,. 
\end{align*}
Let $\kappa_{R}$ be a Lipschitz constant of $h$ on the ball $\sB_{R}$.
Then, applying Gronwall's lemma once more, we obtain 
\begin{equation*}
\norm{u_{y_n} - u_y}_\infty \,\le\, \Bigl(\norm{u_n-u}_\infty
+  \norm{r'_n-r'}_{L^1}\,\sup_{\sB_R}\, h\Bigr)\,
\E^{\kappa_{R}\norm{r'_n}_\infty}
\,\xrightarrow[n\to\infty]{}\, 0\,. 
\end{equation*}
This completes the proof.
\qed\end{proof}

\begin{remark}\label{R4}
Suppose $h$, $x$, $x_n$, and $y$ are as in Lemma~\ref{L4}, but
$y_n$ satisfies
\begin{equation*}
y_n(t) \,=\, x_n(t) + \int_0^t h_n(y_n(s))\, \D s\,, \quad  t\ge 0\,,
\end{equation*}
for some sequence $h_n$ which converges to $h$ uniformly on compacta.
Then a slight variation of the proof of Lemma~\ref{L4}, shows
that $y_n \to y$ in $D_m$.
\end{remark}

\noindent \emph{Control approximation.}
Given a continuous map $v\colon\cK_+\to\Delta$, we construct a stationary Markov
control for the $n$-system which approximates it in a suitable manner.

%
%
%

Recall that $\langle e,\rho\rangle=1$. Let
\begin{equation*}
\mathscr{X}_n\,\df\,
\bigl\{  n^{-\nicefrac{1}{\alpha}} (y - \rho n)\colon
y\in\ZZ_+^m\,, \langle e,y\rangle > n\bigr\}\,,
\end{equation*}
and
$\mathfrak{Z}_n= \mathfrak{Z}_n(\Hat{x})$ denote the set of work-conserving actions
at $\Hat{x}\in\mathscr{X}_n$.
It is clear that a work-conserving action $\Hat{z}^n\in\mathfrak{Z}_n(\Hat{x})$
can be parameterized
via a map $\Hat{U}^n\colon\cK_+\to\Delta$, satisfying
\begin{equation} \label{U-para}
\qquad \Hat{z}_i^n(\Hat{x}) \,=\, \Hat{x}_i - 
\langle e,\Hat{x}\rangle^+ \Hat{U}^n_i(\Hat{x})\,.
\end{equation}

Consider the mapping defined in \eqref{U-para}
from $\Hat{z}^n\in\mathfrak{Z}_n(\Hat{x})$ to $\Hat{U}^n$,
and denote its image as $\widehat{\mathscr{U}}_n(\Hat{x})$.
Let
\begin{equation}\label{E-hatU}
\Hat{U}^n[v](\Hat{x})\,\in\, \Argmin_{u\in \widehat{\mathscr{U}}_n(\Hat{x})}\,
\babs{\langle e,\Hat{x}\rangle\,u-
\langle e,\Hat{x}\rangle\,v(\Hat{x})}\,,\quad \Hat{x}\in \mathscr{X}_n\,.
\end{equation}
The function $\Hat{U}^n[v]$ has
the following property. 
There exists a constant $\Check{c}$ such that with
$\Check\sB_n$ denoting the ball of radius
$\Check{c} n^{\Check\alpha}$ in $\Rm$, with $\Check\alpha \df 1-\nicefrac{1}{\alpha}$,
then
\begin{equation}\label{E-approx}
\sup_{\Hat{x}\in\Check\sB_n\cap \mathscr{X}_n}\,
\Babs{\langle e,\Hat{x}\rangle\,\Hat{U}^n[v](\Hat{x})-
\langle e,\Hat{x}\rangle\,v(\Hat{x})}\,\le\,
n^{-\nicefrac{1}{\alpha}}\,.
\end{equation}

We have the following functional limit theorem.

\begin{theorem}\label{T6}
Let $v\in\tUsm$.
Under any stationary Markov control $\Hat{U}^n[v]$ defined in \eqref{E-hatU},
and provided there exists $X(0)$ such that
$\Hat{X}^n(0) \Rightarrow X(0)$ as $n\to\infty$, we have
\begin{equation*}
\Hat{X}^n \,\Rightarrow\, X \qquad\text{in} \quad (D_m, M_1)
\quad \text{as} \quad n \to\infty\,,
\end{equation*}
where the limit process $X$ is the unique strong solution to the SDE in \eqref{E-sde}.
The parameters in the drift are given by
$\ell_i$ in \eqref{E4.1},  $\mu_i$, and $\gamma_i$, for $i=1,\dotsc,m$.
\end{theorem}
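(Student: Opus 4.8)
The plan is to establish the weak convergence $\Hat{X}^n \Rightarrow X$ by decomposing the prelimit process into a sum of a converging noise term, a converging deterministic offset, and an integral of the drift evaluated along the path, and then applying the continuous-mapping theorem through the integral representation map $\phi$ of Lemma~\ref{L4}. First I would write the basic queueing identities: using the balance equations $X^n_i = Q^n_i + Z^n_i$, the work-conserving constraint $\langle e, Z^n\rangle = \langle e, X^n\rangle \wedge n$, and the standard rate-conservation / martingale decomposition of the arrival, service, and abandonment counting processes, express $\Hat{X}^n_i$ in the form
\begin{equation*}
\Hat{X}^n_i(t) \,=\, \Hat{X}^n_i(0) + \Hat{A}^n_i(t) + \ell^n_i t + \Hat{M}^{n,s}_i(t) + \Hat{M}^{n,a}_i(t) + \int_0^t b^n_i\bigl(\Hat{X}^n(s), \Hat{U}^n[v](\Hat{X}^n(s))\bigr)\,\D s\,,
\end{equation*}
where $\Hat{M}^{n,s}_i, \Hat{M}^{n,a}_i$ are the scaled compensated service and abandonment martingales, and $b^n$ is the prelimit drift, which converges uniformly on compacta to $b(\cdot,\cdot)$ of \eqref{E-drift} by \eqref{E4.1} and the definition of the modified Halfin--Whitt regime. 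This is essentially the decomposition in \cite{PW10}; I would cite it and adapt the bookkeeping to the multiclass setting and to the stable scaling $n^{-1/\alpha}$.

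Next I would handle the convergence of the noise terms. The term $\Hat{A}^n \Rightarrow \Hat{A}$ in $(D_m, M_1)$ is the standing FCLT assumption. The compensated service martingale $\Hat{M}^{n,s}$ has jumps of size $O(n^{-1/\alpha})$ and a predictable quadratic variation of order $n^{1-2/\alpha}\int_0^t Z^n_i(s)\,\D s / n \sim n^{1-2/\alpha}\rho_i t \to 0$ since $\alpha < 2$; hence by Doob's inequality $\Hat{M}^{n,s} \Rightarrow 0$ uniformly on $[0,T]$, and similarly $\Hat{M}^{n,a}\Rightarrow 0$ using that $Q^n$ stays $o(n)$ on the relevant events. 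Combined with $\ell^n_i \to \ell_i$ and $\Hat{X}^n(0)\Rightarrow X(0)$, this shows the ``input'' process
\begin{equation*}
\Hat{\mathscr{A}}^n(t) \,\df\, \Hat{X}^n(0) + \Hat{A}^n(t) + \ell^n t + \Hat{M}^{n,s}(t) + \Hat{M}^{n,a}(t)
\end{equation*}
converges in $(D_m, M_1)$ to $X(0) + \Hat{A}(t) + \ell t$. Here I must be careful because $M_1$ is not closed under addition in general; however, the martingale terms converge to zero in the uniform (hence $J_1$, hence $M_1$) topology, and adding a uniformly convergent sequence to an $M_1$-convergent sequence preserves $M_1$-convergence (this addition lemma is in Chapter~12 of \cite{WW02}), so the sum is legitimate.

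Then the crux: the prelimit drift $b^n_v(x) \df b^n(x, \Hat{U}^n[v](x))$ is \emph{not} exactly $b_v(x)$, but the control approximation bound \eqref{E-approx} forces $\Hat{U}^n[v](\Hat{x})$ to track $v(\Hat{x})$ to within $n^{-1/\alpha}$ on balls of radius $\Check{c} n^{\Check\alpha}$, which (after undoing the scaling) are balls of macroscopic radius growing in $n$; since $\Hat{X}^n$ is tight and stays in a fixed compact set on $[0,T]$ with probability tending to one, on that event $b^n_v(\Hat{X}^n(s)) \to b_v(\Hat{X}^n(s))$ uniformly. Because $v\in\tUsm$, the limiting drift $b_v$ is locally Lipschitz with at most linear growth, so Lemma~\ref{L4} together with Remark~\ref{R4} (to absorb the $n$-dependent drift $b^n_v \to b_v$) applies: the map $\phi$ is continuous on $(D_m, M_1)$, and
\begin{equation*}
\Hat{X}^n \,=\, \phi_n\bigl(\Hat{\mathscr{A}}^n\bigr)\,, \qquad \phi_n(x)(t) \,=\, x(t) + \int_0^t b^n_v\bigl(\phi_n(x)(s)\bigr)\,\D s\,,
\end{equation*}
so the continuous mapping theorem (generalized to a converging sequence of maps, as in Remark~\ref{R4}) yields $\Hat{X}^n \Rightarrow \phi(X(0) + \Hat{A} + \ell\,\cdot\,)$, which by Theorem~\ref{T1} and the discussion after (A1)--(A2) is precisely the unique strong solution $X$ of \eqref{E-sde}. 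I would also remark that the identification of the limit's drift parameters as $\ell_i, \mu_i, \gamma_i$ is immediate from tracking constants through the decomposition. The main obstacle is the interplay between the $M_1$ topology and the additive decomposition --- specifically, ensuring that the sum of the $M_1$-convergent $\Hat{A}^n$ and the other (uniformly convergent) terms still converges in $M_1$, and that the composition with $\phi_n$ is justified even though the argument lives only in $M_1$ and not $J_1$; this is exactly what Lemma~\ref{L4} and Remark~\ref{R4} are designed to supply, so the proof reduces to assembling these pieces carefully, with the queueing-theoretic martingale estimates (which are routine and parallel to \cite{PW10, APS19}) as the only other substantial input.
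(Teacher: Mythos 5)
Your proposal is correct and follows essentially the same route as the paper: write the martingale decomposition of $\Hat{X}^n$, show the compensated service and abandonment martingales vanish in the limit (quadratic variation $\sim n^{1-2/\alpha}\to 0$), and pass to the limit through the integral map using Lemma~\ref{L4} and Remark~\ref{R4} together with the control-approximation bound \eqref{E-approx}, exactly as the paper does by invoking the argument of \cite[Theorem~4.1]{APS19} with the $n$-dependent drift $h_n$. You supply considerably more detail on the martingale estimates and on the $M_1$-topology bookkeeping than the paper's terse proof, but the decomposition, the key lemmas, and the identification of the limit are the same.
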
 

\begin{proof}
The FCLT-scaled processes $\Hat{X}^n_i$, $i=1,\dotsc,m$, can be represented as
\begin{align*}
\Hat{X}^n_i(t) &\,=\, \Hat{X}^n_i(0) + \ell^n_i t
- \mu_i \int_0^t \Hat{Z}^n_i(s)\,\D s - 
\gamma_i \int_0^t \Hat{Q}^n_i(s)\, \D s + \Hat{A}^n_i(t) - \Hat{M}^n_{S,i}(t) - \Hat{M}
^n_{R,i}(t)
\end{align*}
where $\ell^n_i$ is defined in \eqref{E4.1}, with
\begin{equation*}
\Hat{M}^n_{S,i}(t)\,=\, n^{-\nicefrac{1}{\alpha}} \bigg(S_i^n \bigg(\mu_i \int_0^t 
Z^n_i(s)\,\D s \bigg) - \mu_i \int_0^t Z^n_i(s) \D s \bigg) \,,
\end{equation*}
\begin{equation*}
\Hat{M}^n_{R,i}(t) \,=\, n^{-\nicefrac{1}{\alpha}} \bigg(R_i^n \bigg(\gamma_i \int_0^t 
Q^n(s)\,\D s \bigg) - \gamma_i \int_0^t Q^n_i(s) \D s \bigg) \,,
\end{equation*}
and $S^n_i, R^n_i$, $i=1,\dotsc,m$, are mutually independent rate-one Poisson
processes, representing the service and reneging (abandonment), respectively. 

The result can be established by mimicking the arguments in the proof of in
\cite[Theorem~4.1]{APS19}, and applying Lemma~\ref{L4} and Remark~\ref{R4},
using the function
\begin{equation*}
h_n(x)\,\df\ \ell^n + M\bigl(x- \langle e, x\rangle^+\hat{U}^n[v](x)\bigr)
- \langle e, x\rangle^+ \Gamma \hat{U}^n[v](x)\,,
\end{equation*}
and the bound in \eqref{E-approx}.
\qed\end{proof}

\section{Concluding remarks}
We have extended some of the results in \cite{APS19} stated for constant controls,
to stationary Markov controls resulting in a locally Lipschitz drift  in the case
of SDEs driven by $\alpha$-stable processes,
and to all stationary Markov controls in the case of SDEs driven by
a Wiener process and a compound Poisson process.
The results in this paper can also be viewed as an extension of some results
in \cite{AHP18}.
However, the work in \cite{AHP18} also studies the prelimit process and establishes
tightness of the stationary distributions.
To the best of our knowledge,
this is an open problem for systems with arrival processes which are
renewal with heavy-tailed
interarrival times (no second moments).
This problem is very important and worth pursuing.

\begin{acknowledgement}
This research was supported in part by 
the Army Research Office through grant W911NF-17-1-001,
in part by the National Science Foundation through grants DMS-1715210,
CMMI-1538149 and DMS-1715875,
and in part by Office of Naval Research through grant N00014-16-1-2956.
Financial support through Croatian Science Foundation under the project 8958
(for N. Sandri\'c) is gratefully acknowledged.
\end{acknowledgement}

\bibliographystyle{spmpsci}
\bibliography{IMA-paper-Ref}

\end{document}